\DeclareMathAlphabet{\mathpzc}{OT1}{pzc}{m}{it}
\numberwithin{equation}{section}
\def\eqnarray{\stepcounter{equation}\let\@currentlabel=\theequation
\global\@eqnswtrue
\tabskip\@centering\let\\=\@eqncr
$$\halign to \displaywidth\bgroup\hfil\global\@eqcnt\z@
  $\displaystyle\tabskip\z@{##}$&\global\@eqcnt\@ne
  \hfil$\displaystyle{{}##{}}$\hfil
  &\global\@eqcnt\tw@ $\displaystyle{##}$\hfil
  \tabskip\@centering&\llap{##}\tabskip\z@\cr}
\def\endeqnarray{\@@eqncr\egroup
      \global\advance\c@equation\m@ne$$\global\@ignoretrue}
\newtheorem{theorem}{Theorem}[section]
\newtheorem{corollary}[theorem]{Corollary}
\newtheorem{definition}[theorem]{Definition}
\newtheorem{proposition}[theorem]{Proposition}
\newtheorem{assumption}[theorem]{Assumption}
\newtheorem{remark}[theorem]{Remark}
\numberwithin{equation}{section}
\def\RR{{\mathbb{R}}}
\def\NN{{\mathbb{N}}}
\def\Om{\Omega}
\def\om{\omega}
\def\om{\omega}
\def\ga{\gamma}
\def\bOm{\overline{\Om}}
\def\pOm{\partial\Omega}
\title[Optimal Control of Fractional PDEs with State Constraints: Parabolic Case]{Moreau-Yosida Regularization for Optimal Control of Fractional PDEs with State Constraints: Parabolic Case 
\\ 
{\it \tiny (Dedicated to Prof. Enrique Zuazua on his 60th Birthday)}}
\author{Harbir Antil, Thomas S. Brown, Deepanshu Verma, Mahamadi Warma}
\address{Department of Mathematical Sciences and the Center for Mathematics and Artificial Intelligence (CMAI), 
		 George Mason University, Fairfax, VA 22030, USA.}
\email{hantil@gmu.edu, tbrown62@gmu.edu, dverma2@gmu.edu, mwarma@gmu.edu}
\thanks{The first three authors are partially supported by NSF grants DMS-1818772, DMS-1913004, 
the Air Force Office of Scientific Research (AFOSR) under Award NO: FA9550-19-1-0036, and 
Department of Navy, Naval PostGraduate School under Award NO: N00244-20-1-0005. 
The fourth author is partially supported by the AFOSR under Award NO:  FA9550-18-1-0242.
}
\keywords{Fractional parabolic PDE, optimal control, state and control constraints,
measure valued datum, Moreau-Yosida regularization, convergence with rate.}
\subjclass[2010]{
49J20,  	
49K20,   
35S15,  	
65R20  	
}
\begin{document}

\begin{abstract}
This paper considers optimal control of fractional parabolic PDEs with both state and 
control constraints. The key challenge is how to handle the state constraints. 
Similarly, to the elliptic case, in this paper, we establish several new 
mathematical tools in the parabolic setting that are of wider interest. For example, existence 
of solution to the fractional parabolic equation with measure data on the right-hand-side. 
We employ the Moreau-Yosida regularization to handle the state constraints. We establish 
convergence, with rate, of the regularized optimal control problem to the original one. 
Numerical experiments confirm what we have proven theoretically. 
\end{abstract}

\maketitle

\section{Introduction}

Fractional diffusion models have recently received a tremendous amount of attention. They 
have shown a remarkable flexibility in capturing anomalous behavior. Recently in 
\cite{weiss2020fractional}, the authors have derived the fractional Helmholtz equations 
using the first principle arguments combined with a constitutive relationship. This article 
also shows a qualitative match between the real data and the numerical experiments. It is
natural to consider the optimal control or inverse problems with such partial differential 
equations (PDEs) as constraints and has also motivated the current study.

The main goal of this paper is to consider an optimal control problem with fractional parabolic 
PDEs as constraints. We denote the fractional exponent by $s$ which lies strictly between 0 and 1. 
The key novelty stems from the state constraints. Optimal control of fractional
PDEs have recently received a significant amount of attention. We refer to 
\cite{HAntil_DVerma_MWarma_2019b,antil2019moreau} for optimal control of fractional elliptic 
PDEs with state constraints. Moreover, see \cite{HAntil_MWarma_2020a} for the optimal 
control of semilinear fractional PDEs with control constraints, see also \cite{MDElia_MGunzburger_2014a} 
for the linear case. We also refer to 
\cite{HAntil_RKhatri_MWarma_2019a,HAntil_DVerma_MWarma_2020a} for the exterior optimal control 
of fractional elliptic and parabolic PDEs with control constraints.

We emphasize that the optimal control of parabolic PDEs with state constraints is not new, 
see for instance \cite{ECasas_1997a} for the classical case $s = 1$. Similarly to our observation 
in the elliptic case \cite{HAntil_DVerma_MWarma_2019b}, we emphasize that almost none of the existing 
works can be directly applied to our fractional setting. For instance, we need finer integrability 
conditions on the data to show the continuity of solution to the fractional PDEs, and we need to create the notion 
of very-weak solution to fractional parabolic PDEs with measure-valued datum.

Next we present the statements of our main results on fractional PDEs while delaying proofs to the body of the paper.  We do this in order to show the novelties of this work and clearly distinguish it from previous similar, yet different, papers.  The novelties for the optimal control problem are described after these results. For $\Om \subset \RR^N$ $(N\ge 1)$, a bounded open set with boundary $\pOm$, we wish to study the following fractional parabolic optimal control 
problem: 
\begin{subequations}\label{eq:dcp}
 \begin{equation}\label{eq:Jd}
    \min_{(u,z)\in (U,Z)} J(u,z) 
 \end{equation}
 subject to the fractional parabolic PDE: Find $u \in U$ solving 
 \begin{equation}\label{eq:Sd}
 \begin{cases}
    \partial_t u + (-\Delta)^s u &= z \quad \mbox{in } Q:= (0,T) \times \Om , \\
                u &= 0 \quad \mbox{in } \Sigma := (0,T) \times (\RR^N\setminus \Om)  , \\
       u(0,\cdot) &= 0 \quad \mbox{in } \Omega  , 
 \end{cases}                
 \end{equation} 
with the state constraints 
 \begin{equation}\label{eq:Ud}
    u|_{Q} \in \mathcal{K} := \left\{ w \in C(\overline{Q}) \ : \ w(x,t) \le u_b(x,t) , 
        \quad \forall (x,t) \in \overline{Q}  \right\} 
 \end{equation}
 and control constraint imposed for 
 \begin{equation}\label{eq:Zd}
        z \in Z_{ad} \subset L^r((0,T),L^p(\Om)).
\end{equation}
\end{subequations}
Here, and subsequently, $C(\overline{Q})$ is the space of continuous functions in $\overline{Q}$ and 
$u_b \in C(\overline{Q})$.  Furthermore,  $Z_{ad}$ is a non-empty, closed and convex set, and the real numbers $p$ and
$r$ fulfill
\begin{equation}
1 > \frac{N}{2ps} + \frac{1}{r} \, . \label{cond-p1}  
\end{equation}
The precise definition of the function spaces $U$ and $Z$ will be given in the forthcoming 
sections. We note that the control problem with nonlocal PDE constraint in \eqref{eq:Sd} may look similar to \cite{HAntil_DVerma_MWarma_2020a} at first glance, however, the two problems are significantly different. First of all, the control is taken as a source in this paper rather than in the exterior of the domain in \cite{HAntil_DVerma_MWarma_2020a}. Secondly, in the current paper, we consider both the control and the state constraints rather than just the control constraints in \cite{HAntil_DVerma_MWarma_2020a}.

Our first main result, stated below, involves the boundedness of $u$. 
    \begin{theorem}[\bf $u$ is bounded]
    \label{thm:bdd}
    Let $\Omega\subset\RR^N$ ($N\ge 1$) be an arbitrary bounded open set.
        Let $z \in L^r((0,T);L^p(\Om))$ with $p$ and $r$ fulfilling 
        \eqref{cond-p1}. Then every weak solution $u \in \mathbb{U}_0$ to 
        \eqref{eq:Sd} belongs to $L^\infty(Q)$ and there is a constant $C>0$ such that
        \begin{equation}\label{eq:Linfbnd}
            \|u\|_{L^\infty(Q)} \le C \|z\|_{L^r((0,T);L^p(\Om))} . 
        \end{equation}
    \end{theorem}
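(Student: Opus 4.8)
The plan is to prove the $L^\infty$ bound by the Stampacchia truncation method (a De Giorgi iteration), which is the natural analogue for the nonlocal operator $(-\Delta)^s$ of the classical Aronson--Serrin argument; condition \eqref{cond-p1} is exactly the fractional counterpart of the Aronson--Serrin integrability condition and is what will make the iteration close. By linearity it suffices to bound $u$ from above, since applying the same estimate to $-u$ (which solves \eqref{eq:Sd} with datum $-z$) controls $u$ from below. I therefore work with the positive truncations $w_k := (u-k)^+ = \max\{u-k,0\}$ for $k \ge 0$.

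First I would test the weak formulation of \eqref{eq:Sd} with $w_k$. Because $u(0,\cdot)=0$ and $k\ge0$ we have $w_k(0,\cdot)=0$, so integrating the time-derivative term over $(0,\tau)$ yields $\tfrac12\|w_k(\tau)\|_{L^2(\Om)}^2$. For the nonlocal term I would use the truncation inequality for the Gagliardo bilinear form $\mathcal E$ associated with $(-\Delta)^s$, namely $\mathcal E\big(u,(u-k)^+\big)\ge [(u-k)^+]_{H^s(\RR^N)}^2$, which follows pointwise from $(a-b)\big((a-k)^+-(b-k)^+\big)\ge \big((a-k)^+-(b-k)^+\big)^2$. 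Together these give the energy estimate
\[
\tfrac12\sup_{0<\tau<T}\|w_k(\tau)\|_{L^2(\Om)}^2+\int_0^T [w_k(t)]_{H^s(\RR^N)}^2\,dt\ \le\ \int_0^T\!\!\int_{A_k(t)}|z|\,w_k\,dx\,dt,
\]
where $A_k(t):=\{x\in\Om:\,u(x,t)>k\}$ is the super-level set, on whose complement $w_k$ vanishes.

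Next I would convert the left-hand side into a single space-time norm of $w_k$. Interpolating $w_k\in L^\infty(0,T;L^2(\Om))$ with $w_k\in L^2(0,T;H^s_0(\Om))$ and invoking the fractional Sobolev embedding $H^s_0(\Om)\hookrightarrow L^{2^*_s}(\Om)$, $2^*_s=\tfrac{2N}{N-2s}$, gives the parabolic embedding $\|w_k\|_{L^{2\kappa}(Q)}^2\le C\,(\text{left-hand side above})$ with parabolic exponent $\kappa=\tfrac{N+2s}{N}$ (the case $N\le 2s$ being even more favorable and treated separately). On the right-hand side I would apply H\"older in space and time using $z\in L^r((0,T);L^p(\Om))$, isolating a power of an integral of $|A_k(t)|$; the H\"older exponents are forced by $p$, $r$, $2\kappa$ and $2^*_s$.

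Finally, for $h>k\ge0$ one has $(h-k)\,\mathbf 1_{A_h}\le w_k$ on $A_h\subset A_k$, which upgrades the measure factor and produces a recursive inequality of Stampacchia type, $\Phi(h)\le C\,\|z\|_{L^r((0,T);L^p(\Om))}^{\,\gamma}\,(h-k)^{-\delta}\,\Phi(k)^{\beta}$ for a suitable super-level functional $\Phi$. \textbf{The crux is the exponent count:} condition \eqref{cond-p1}, $\tfrac{N}{2ps}+\tfrac1r<1$, is precisely what forces the feedback exponent $\beta>1$, so that Stampacchia's algebraic lemma applies and yields $\Phi(k)=0$ for all $k\ge k_0$ with $k_0\le C\,\|z\|_{L^r((0,T);L^p(\Om))}$. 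This gives $\esssup_Q u\le C\,\|z\|_{L^r((0,T);L^p(\Om))}$, and the symmetric argument for $-u$ completes \eqref{eq:Linfbnd}. I expect the main obstacle to be exactly this bookkeeping---verifying that \eqref{cond-p1} produces $\beta>1$ with the correct fractional parabolic scaling---rather than any individual inequality; a secondary technical point is to justify $w_k$ as an admissible test function and the time integration by parts at the regularity available in $\mathbb{U}_0$, which may require a Steklov time-averaging of the equation.
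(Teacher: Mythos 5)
Your plan is sound, but it is a genuinely different route from the one taken in the paper. You propose the Stampacchia--De Giorgi truncation scheme (the fractional Aronson--Serrin argument): test with $(u-k)^+$, use the pointwise inequality $(a-b)\big((a-k)^+-(b-k)^+\big)\ge\big((a-k)^+-(b-k)^+\big)^2$ to control the Gagliardo form, combine the resulting Caccioppoli estimate with the parabolic Sobolev embedding of exponent $\kappa=\tfrac{N+2s}{N}$, and close a super-level-set recursion, with \eqref{cond-p1} guaranteeing the feedback exponent exceeds one. This is essentially the strategy of \cite[Theorem~29]{TLeonori_IPeral_APrimo_FSoria_2015a}, which the authors explicitly cite and deliberately avoid (they report being unable to follow that proof and suspecting misprints). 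The paper instead exploits linearity and the Duhamel representation $u(t)=\int_0^t e^{-(t-\tau)(-\Delta)_D^s}z(\tau)\,d\tau$ together with the ultracontractivity bound $\|e^{-t(-\Delta)_D^s}f\|_{L^\infty(\Omega)}\le C e^{-\lambda_1 t/p}\,t^{-N/(2sp)}\|f\|_{L^p(\Omega)}$, and then a single application of Young's convolution inequality in time; condition \eqref{cond-p1} appears there simply as the integrability of $t^{-\frac{Nr}{2sp(r-1)}}$ near $t=0$. The semigroup proof is a few lines and gives the constant directly, but it relies on the explicit solution formula and the submarkovian/ultracontractive structure of $(-\Delta)_D^s$; your iteration is longer and its correctness hinges entirely on the exponent bookkeeping you defer (plus the technical justification of $(u-k)^+$ as a test function at $\mathbb{U}_0$ regularity via Steklov averaging, which you correctly flag), but it is more robust, extending in principle to nonlinear or non-autonomous perturbations where no semigroup representation is available. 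As written, your proposal is a credible outline of a valid alternative proof rather than a complete one; to make it airtight you would need to carry out the H\"older exponent count showing that $\tfrac{N}{2ps}+\tfrac1r<1$ yields $\beta>1$ in the recursion, and to treat the low-dimensional case $N\le 2s$ separately as you note.
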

We remark that a similar result has been recently shown in \cite[Theorem~29 and Corollary~3]{TLeonori_IPeral_APrimo_FSoria_2015a}. First, our proof (see Section \ref{s:statecont}) is much simpler, it follows directly by using semigroup arguments.  Secondly, we were not able to follow the arguments given in the proof of  \cite[Theorem~29]{TLeonori_IPeral_APrimo_FSoria_2015a}. We think that the proof maybe contain some non obvious misprints.

From Theorem \ref{thm:bdd}, we get the continuity of $u$ as a corollary.
\begin{corollary}[\bf $u$ is continuous] \label{cor:ucont}
Let $\Omega\subset\RR^N$ ($N\ge 1$) be a bounded open set with a Lipschitz continuous boundary and has the exterior ball condition. Let  $z \in L^r((0,T);L^p(\Om))$ with $p$ and $r$ fulfilling
        \eqref{cond-p1}. Then every weak solution $u \in \mathbb{U}_0$ to \eqref{eq:Sd} belongs 
        to $C(\overline{Q})$ and there is a constant $C>0$ such that
        \begin{equation}\label{eq:ucont}
            \|u\|_{C(\overline{Q})} \le C \|z\|_{L^r((0,T);L^p(\Om))} . 
        \end{equation}
    \end{corollary}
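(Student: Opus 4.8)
The plan is to take Theorem \ref{thm:bdd} as the starting point: under the integrability condition \eqref{cond-p1} every weak solution already satisfies $u\in L^\infty(Q)$ with $\|u\|_{L^\infty(Q)}\le C\|z\|_{L^r((0,T);L^p(\Om))}$, so the only task is to upgrade $L^\infty(Q)$ to $C(\overline{Q})$ while keeping the same bound. I would carry this out through the Duhamel representation
\[
 u(t)=\int_0^t S(t-\sigma)\,z(\sigma)\,d\sigma ,
\]
where $(S(\tau))_{\tau\ge0}$ is the semigroup generated by the realization $A$ of $(-\Delta)^s$ in $\Om$ with the zero exterior Dirichlet condition, i.e.\ the same semigroup used to prove Theorem \ref{thm:bdd}; since $u(0,\cdot)=0$ this formula is exact.

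The geometric hypotheses (Lipschitz boundary together with the exterior ball condition) enter here, and essentially only here. They guarantee that $A$ generates a Feller semigroup, meaning each $S(\tau)$ maps into $C_0(\overline{\Om})$ (continuous functions on $\overline{\Om}$ vanishing on $\pOm$) and is strongly continuous there. I would invoke the boundary-regularity and heat-kernel theory for the Dirichlet fractional Laplacian on such domains to justify this: the exterior ball condition is exactly what produces barriers at $\pOm$ and yields continuity of the kernel up to the boundary. Combined with the ultracontractive estimate $\|S(\tau)\|_{L^p(\Om)\to L^\infty(\Om)}\le C\tau^{-N/(2ps)}$, which is already implicit in the analysis behind Theorem \ref{thm:bdd}, this shows $S(t-\sigma)z(\sigma)\in C_0(\overline{\Om})$ for a.e.\ $\sigma$, so the Bochner integral defining $u(t)$ takes values in $C_0(\overline{\Om})$.

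With these ingredients the quantitative bound drops out, and condition \eqref{cond-p1} is precisely what makes it work. Writing $\gamma:=N/(2ps)$ and applying H\"older's inequality in time with exponent $r$ (conjugate $r'$),
\[
 \|u(t)\|_{C(\overline{\Om})}\le C\int_0^t (t-\sigma)^{-\gamma}\|z(\sigma)\|_{L^p(\Om)}\,d\sigma
 \le C\Big(\int_0^t (t-\sigma)^{-\gamma r'}\,d\sigma\Big)^{1/r'}\|z\|_{L^r((0,t);L^p(\Om))} .
\]
The singular time integral is finite exactly when $\gamma r'<1$, that is $\tfrac{N}{2ps}+\tfrac1r<1$, which is \eqref{cond-p1}; its value $t^{1-\gamma r'}/(1-\gamma r')$ is bounded uniformly on $[0,T]$ and tends to $0$ as $t\to0^+$. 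This delivers the estimate \eqref{eq:ucont}.

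It then remains to assemble joint continuity on $\overline{Q}$: spatial continuity up to $\pOm$ at each fixed $t$ is the $C_0(\overline{\Om})$-valuedness just obtained; continuity in time at any $t_0>0$ follows by splitting the convolution and using strong continuity of $(S(\tau))$ on $C_0(\overline{\Om})$ together with dominated convergence; and continuity at $t=0$ is the vanishing $\|u(t)\|_{C(\overline{\Om})}\to0$ noted above, matching $u(0,\cdot)=0$. Hence $u\in C([0,T];C_0(\overline{\Om}))\hookrightarrow C(\overline{Q})$. The hard part will be the Feller step, namely establishing that the images $S(\tau)f$ are continuous up to the boundary and vanish there, since the interior regularity and the time estimates are comparatively routine once the ultracontractivity bound is available; this is exactly where the Lipschitz and exterior ball hypotheses cannot be dispensed with.
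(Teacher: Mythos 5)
Your proposal is correct in outline and rests on the same essential ingredient as the paper's proof, namely that the Dirichlet fractional Laplacian semigroup maps $L^p(\Om)$ ($p>\tfrac{N}{2s}$) into $C_0(\Omega):=\{w\in C(\bOm): w=0 \text{ on }\pOm\}$ and restricts to a strongly continuous semigroup there; this is exactly the ``Feller step'' you flag as the hard part, and the paper supplies it not through heat-kernel barriers but through its earlier elliptic result that the resolvent $R(\lambda,(-\Delta)_D^s)$ maps $L^p(\Om)$ into $C_0(\Omega)$ under the Lipschitz and exterior ball hypotheses. Where you genuinely diverge is in handling general data $z\in L^r((0,T);L^p(\Om))$: you push the singular convolution through directly, showing the Bochner integral is $C_0(\Omega)$-valued via ultracontractivity plus the Feller property and then proving time continuity by splitting the convolution, whereas the paper first treats $z\in C([0,T];C_0(\Om))$ (where the Duhamel integrand is continuous and no singular-kernel analysis is needed), and then passes to general $z$ by density together with the uniform estimate \eqref{eq:Linfbnd} of Theorem~\ref{thm:bdd}, so that $u$ is a uniform limit of continuous functions. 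Your route is self-contained in time but obliges you to verify the strong measurability of $\sigma\mapsto S(t-\sigma)z(\sigma)$ as a $C_0(\Omega)$-valued map and to run the dominated-convergence argument against the integrable singularity $(t-\sigma)^{-N/(2ps)}$; the paper's route trades these technicalities for the approximation argument, which is essentially free once Theorem~\ref{thm:bdd} is in hand. Both yield the bound \eqref{eq:ucont} from condition \eqref{cond-p1} in the same way (your H\"older computation and the paper's Young's inequality computation impose the identical restriction $\tfrac{N}{2ps}\tfrac{r}{r-1}<1$). If you keep your version, you should replace the appeal to generic heat-kernel theory by the concrete elliptic $C_0$-regularity result the paper cites, since that is the only place the geometric hypotheses are actually used.
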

The continuity of $u$ in the elliptic case has been recently established in \cite{HAntil_DVerma_MWarma_2019b}. However, such a result for the parabolic problem \eqref{eq:Sd} is new to this work.

Our next result shows the well-posedness of the PDE when the right-hand side is a Radon measure.  This result is needed because the Lagrange multiplier corresponding to the state constraints \eqref{eq:Ud} is a measure.  Therefore, when we study the adjoint equation, we have a measure in the right-hand side of the equation.  We also allow for the possibility of non-zero measure-valued initial condition as can be seen in the following: For $z\in \mathcal{M}(\overline{Q})$ let $z_Q := z\vert_{Q}$ and $z_0 := z\vert_{\{0\} \times \overline{\Omega}}$, and consider 
\begin{align} \label{eq:SD_init_meas}
\begin{cases}
\partial_t u + (-\Delta)^s u 
&= z_{Q}  \quad \mbox{in } Q ,  \\
u  &=0 \quad\;\; \mbox{in } \Sigma ,  \\
u(0,\cdot) &= z_{0} \quad \mbox{in } \overline{\Om}. 
\end{cases} 
\end{align}

 \begin{theorem} \label{thm:zM} 
    Let $p,r$ fulfill \eqref{cond-p1} and $z \in \mathcal{M}(\overline{Q})$. Then there is a unique very weak solution $u \in 
        \left(L^r((0,T);L^p(\Om))\right)^\star$ of \eqref{eq:SD_init_meas}. Moreover, there is 
         a constant $C > 0$ such that 
        \begin{equation}\label{eq:LqLpbnd}
            \|u\|_{L^r((0,T);L^p(\Om))^\star} \le C \|z\|_{\mathcal{M}(\overline{Q})} . 
        \end{equation}
    \end{theorem}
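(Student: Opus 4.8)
The plan is to establish existence via a duality (transposition) argument and uniqueness by a direct testing argument. First I would define the notion of very weak solution precisely: $u \in \left(L^r((0,T);L^p(\Om))\right)^\star$ is a very weak solution if for every test function $\varphi$ lying in a suitable space of solutions to the \emph{backward} adjoint problem, the identity
\begin{equation*}
\int_{\overline{Q}} \varphi \, dz_Q + \int_{\overline{\Om}} \varphi(0,\cdot)\, dz_0 = \langle u, g \rangle
\end{equation*}
holds, where $g$ is the right-hand side driving the adjoint problem. The natural choice is to take $g \in L^r((0,T);L^p(\Om))$ and let $\varphi$ solve the dual equation $-\partial_t \varphi + (-\Delta)^s \varphi = g$ in $Q$, with $\varphi = 0$ in $\Sigma$ and the terminal condition $\varphi(T,\cdot) = 0$. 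The exponents in \eqref{cond-p1} are exactly what guarantees, via Corollary~\ref{cor:ucont} applied to the time-reversed problem, that $\varphi \in C(\overline{Q})$, so that the integrals against the measures $z_Q$ and $z_0$ make sense.

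The construction then proceeds as follows. Given $z \in \mathcal{M}(\overline{Q})$, I would define a linear functional $L$ on $L^r((0,T);L^p(\Om))$ by
\begin{equation*}
L(g) := \int_{\overline{Q}} S g \, dz_Q + \int_{\overline{\Om}} (Sg)(0,\cdot)\, dz_0,
\end{equation*}
where $S g = \varphi$ is the solution operator for the backward adjoint problem. The key estimate is the boundedness of $L$: using Corollary~\ref{cor:ucont} for the adjoint problem gives $\|\varphi\|_{C(\overline{Q})} \le C\|g\|_{L^r((0,T);L^p(\Om))}$, and since $\left|\int_{\overline{Q}} \varphi\, dz_Q\right| \le \|\varphi\|_{C(\overline{Q})}\|z_Q\|_{\mathcal{M}(\overline{Q})}$ and similarly for the initial term, we obtain $|L(g)| \le C\|z\|_{\mathcal{M}(\overline{Q})}\|g\|_{L^r((0,T);L^p(\Om))}$. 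By the Riesz representation theorem (identifying the dual) there is a unique $u \in \left(L^r((0,T);L^p(\Om))\right)^\star$ with $L(g) = \langle u, g\rangle$ for all $g$, and $\|u\|_{\left(L^r((0,T);L^p(\Om))\right)^\star} \le C\|z\|_{\mathcal{M}(\overline{Q})}$, which is precisely \eqref{eq:LqLpbnd}. This $u$ is by construction a very weak solution.

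For uniqueness, I would show that if $u$ is a very weak solution with $z = 0$, then $u = 0$. This follows directly from the defining identity: for every $g \in L^r((0,T);L^p(\Om))$ we get $\langle u, g\rangle = 0$, and since $g$ ranges over a dense (indeed the whole) predual, $u = 0$ as an element of the dual space. The main obstacle, and the step requiring the most care, is verifying that the adjoint solution operator $S$ is well-defined and that Corollary~\ref{cor:ucont} genuinely applies to the backward problem: one must confirm that the time reversal $t \mapsto T - t$ turns the forward regularity theory into the needed backward regularity, that the regularity hypotheses on $\Om$ (Lipschitz boundary, exterior ball condition) carry over, and that the boundary/terminal data are zero so the homogeneous theory applies. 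A secondary technical point is ensuring the pairing $\int_{\overline{\Om}} \varphi(0,\cdot)\, dz_0$ is well-defined, which requires the trace $\varphi(0,\cdot)$ to be continuous on $\overline{\Om}$ — again supplied by Corollary~\ref{cor:ucont}.
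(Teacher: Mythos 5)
Your proposal is correct and follows essentially the same transposition argument as the paper: solve the backward dual problem for each $\zeta\in L^r((0,T);L^p(\Om))$, use Corollary~\ref{cor:ucont} to get continuity of the dual solution and hence boundedness of the resulting functional, define $u$ as the adjoint of the dual solution operator applied to $z$, and obtain uniqueness from the fact that every $g$ in the predual is realized as $(-\partial_t+(-\Delta)^s)\varphi$ for an admissible test function. The only cosmetic difference is that you package the construction as a linear functional $L$ and invoke duality directly, whereas the paper writes $u=\Xi^*z$ for the explicit operator $\Xi:\zeta\mapsto w$; these are the same argument.
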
  
    
    The above result is used to show the well-posedness of the adjoint equation, a fractional parabolic PDE with measure-valued right-hand-side.  The adjoint equation arises when we derive the first order optimality conditions of the control problem \eqref{eq:dcp}.  We again emphasize the novelty of \eqref{eq:dcp} in that our PDE constraint is a fractional parabolic PDE with control taken in $Q$ (space and time) and we impose additional control and state constraints.  The Lagrange multiplier associated with the state constraints is a Radon measure, which necessitates the introduction of the notion of very-weak solutions to the adjoint equation (see Sections \ref{s:statecont} and \ref{s:ocp}).  Due to the difficulties in solving such an equation in practice, we introduce a regularized version of the optimal control problem using the Moreau-Yosida regularization (see \cite{KIto_KKunisch_2008a,HiHi2009, antil2019moreau}, among others). This enables us to create an algorithm to 
solve the optimal control problem.  While this approach is well-known, there are almost no works in the literature for fractional parabolic problems.  For example, see \cite{NeTr2009}, where the regularization is presented for parabolic problems in the classical case, but the optimality conditions of the original problem are not discussed.  For fractional problems, the Moreau-Yosida regularization was used recently in \cite{antil2019moreau} in the fractional elliptic case, and in this work we provide the details needed to extend those results to the current context.

The rest of the paper is organized as follows.
In Section~\ref{s:not}, we first introduce some 
notation and the relevant function spaces. The content of this section is well-known. 
The proofs of our main results stated above, as well as the precise definitions needed to state the results, are found in Section~\ref{s:statecont}.
In Section~\ref{s:ocp}, we study our optimal control problem, establish its well-posedness, and derive the first order optimality conditions.  We introduce the Moreau-Yosida regularized problem in Section~\ref{s:rocp} and show convergence
(with rate) of the regularized solutions to the original solution.  Our numerical examples in 
Section~\ref{s:num}, clearly confirm all our theoretical findings.

\section{Notation and preliminaries}
\label{s:not}

The goal of this section is to introduce some notation and state some preliminary results that are needed 
in the proofs of our main results. Unless otherwise stated, $\Om \subset \RR^N$$(N \ge 1)$ is an open 
bounded set, $0 < s < 1$. 
We define the space  
    \[
        W^{s,2}(\Om) := \Big\{ u \in L^2(\Om) \ : \int_{\Omega}\int_{\Omega}\frac{|u(x)-u(y)|^2}{|x-y|^{N+2s}}\;dxdy<\infty\Big\}, 
    \]
which is a Sobolev space when we endow it with the norm  
 \[
    \|u\|_{W^{s,2}(\Om)} := \left(\int_\Om |u|^2\;dx 
        + \int_{\Omega}\int_{\Omega}\frac{|u(x)-u(y)|^2}{|x-y|^{N+2s}}\;dxdy \right)^{\frac12}.      
 \]
We denote
\begin{equation*}
 W_{0}^{s,2}(\Omega ):=\overline{\mathcal{D}(\Omega )}^{W^{s,2}(\Omega )} ,
\end{equation*}
where we are taking $\mathcal{D}(\Om)$ to be the space of smooth functions with compact support in $\Omega$.

To study  the Dirichlet problem \eqref{eq:Sd} we also need to consider the following fractional order Sobolev space 
    \[
        \widetilde{W}^{s,2}_0(\Om) := \left\{ u \in W^{s,2}(\RR^N) \;:\; u = 0 \mbox{ in } 
                \RR^N\setminus\Om \right\} . 
    \]    
Notice that 
    \begin{align*}
        \|u\|_{\widetilde{W}_0^{s,2}(\Om)}:=\left(\int_{\RR^N}\int_{\RR^N}\frac{|u(x)-u(y)|^2}{|x-y|^{N+2s}}\;dxdy\right)^{\frac 12}
    \end{align*}
defines an equivalent norm on $\widetilde{W}_0^{s,2}(\Om)$.     

    \begin{remark}\label{remark}
    {\rm
        We recall the following result taken from \cite[Remark~2.2]{HAntil_DVerma_MWarma_2019b}.
        \begin{enumerate}[(i)]
            \item The embeddings 
            \begin{equation}\label{inj1}
                W_0^{s,2}(\Omega), \;\widetilde{W}^{s,2}_0(\Om) \hookrightarrow
                \begin{cases}
                    L^{2^\star}(\Omega)\;\;&\mbox{ if }\; N>2s,\\
                    L^p(\Omega),\;\;p\in[1,\infty)\;\;&\mbox{ if }\; N=2s,\\
                    C^{0,s-\frac{N}{2}}(\bOm)\;\;&\mbox{ if }\; N<2s,
                \end{cases}
            \end{equation}
            are continuous, where we have set
            \begin{align}\label{eq:2star}
                2^\star:=\frac{2N}{N-2s},\quad N>2>2s \mbox{ or if } N = 1 \mbox{ and } 0 < s < \frac12 .
            \end{align}

            \item Assume that that $\Omega$ has  a Lipschitz continuous boundary.  If $0<s\ne \frac 12<1$, then  the spaces  $W_0^{s,2}(\Omega)$ and $\widetilde{W}^{s,2}_0(\Om)$ coincide with equivalent norm. But if $s=\frac 12$, then $\widetilde{W}^{\frac 12,2}_0(\Om)$ is a proper subspace of $W_0^{\frac 12,2}(\Omega)$. We refer to \cite[Chapter 1]{Gris} for more details.       
        \end{enumerate}         
    }    
    \end{remark}

We shall denote by $
\widetilde{W}^{-s,2 }(\Om)$  the dual of 
$\widetilde{W}_{0}^{s,p}(\Om )$, i.e., 
$ \widetilde{W}^{-s,2}(\Om ):=(\widetilde{W}_{0}^{s,2}(\Om ))^{\star }$, with respect to the pivot space $L^2(\Omega)$ so that we have the following continuous embeddings:
\begin{align*}
 \widetilde{W}^{s,2}_0(\Om) \hookrightarrow L^2(\Omega)\hookrightarrow \widetilde{W}^{-s,2 }(\Om).
\end{align*}
For more details on fractional order Sobolev spaces we refer to \cite{NPV,Gris,War} and their references.

We are now ready to give a rigorous definition of the fractional Laplace operator $(-\Delta)^s$. Consider the space
    \begin{equation*}
        \mathbb{L}_s^{1}(\RR^N):=\left\{u:\RR^N\rightarrow
        \mathbb{R}\;\mbox{
        measurable}:\;\int_{\RR^N}\frac{|u(x)|}{(1+|x|)^{N+2s}}%
        \;dx<\infty \right\}.
    \end{equation*}
Then for a function $u$ in $\mathbb{L}_s^{1}(\RR^N)$ and $\varepsilon >0$, we let
    \begin{equation*}
        (-\Delta )_{\varepsilon }^{s}u(x)=C_{N,s}\int_{\{y\in \RR^N,|y-x|>\varepsilon \}}
        \frac{u(x)-u(y)}{|x-y|^{N+2s}}dy,\;\;x\in\RR^N . 
    \end{equation*}%
Here the normalization constant $C_{N,s}$ is given by 
    \begin{equation*}
        C_{N,s}:=\frac{s2^{2s}\Gamma\left(\frac{2s+N}{2}\right)}{\pi^{\frac
        N2}\Gamma(1-s)},
    \end{equation*}%
where $\Gamma $ denotes the standard Euler Gamma function (see, e.g. \cite{Caf3,NPV,War-DN1,War}). 
The fractional Laplacian $(-\Delta )^{s}$ is then defined for $u\in \mathbb{L}_s^{1}(\RR^N)$  by 
the formula
\[
(-\Delta )^{s}u(x)=C_{N,s}\mbox{P.V.}\int_{\RR^N}\frac{u(x)-u(y)}{|x-y|^{N+2s}}dy 
=\lim_{\varepsilon \downarrow 0}(-\Delta )_{\varepsilon
}^{s}u(x),\;\;x\in\RR^N, 
\]
provided that the limit exists for a.e. $x\in\RR^N$. 

Next we define the operator $(-\Delta)_D^s$ in $L^2(\Omega)$ as follows:
\[
D((-\Delta)_D^s):=\Big\{u|_{\Om},\; u\in \widetilde{W}_0^{s,2}(\Om):\; (-\Delta)^su\in L^2(\Omega)\Big\},\;\;(-\Delta)_D^s(u|_{\Om}):=(-\Delta)^su\;\mbox{ a.e. in }\;\Omega.
\] 
Notice that $(-\Delta)_D^s$ is the realization of the fractional Laplace operator $(-\Delta)^s$ in $L^2(\Om)$ 
with the zero Dirichlet exterior condition $u=0$ in $\RR^N\setminus\Omega$. The next result is well-known (see e.g. \cite{BWZ,ClWa,SV2}).  
\begin{proposition}\label{semigroup}
The operator $(-\Delta)_D^s$ has a compact resolvent and $-(-\Delta)_D^s$ generates  a strongly continuous submarkovian semigroup $(e^{-t(-\Delta)_D^s})_{t\ge 0}$ on $L^2(\Omega)$.
\end{proposition}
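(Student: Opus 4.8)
The plan is to realize $(-\Delta)_D^s$ as the nonnegative self-adjoint operator associated, via Kato's first representation theorem, with a symmetric, densely defined, closed bilinear form on $L^2(\Om)$, and then to read off each asserted property directly from the form. Concretely, I would set $V:=\widetilde{W}_0^{s,2}(\Om)$ and work with
\[
\mathcal{E}(u,v):=\frac{C_{N,s}}{2}\int_{\RR^N}\int_{\RR^N}\frac{(u(x)-u(y))(v(x)-v(y))}{|x-y|^{N+2s}}\,dx\,dy,\qquad u,v\in V.
\]

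First I would check that $(\mathcal{E},V)$ falls under the form machinery. Density of $V$ in $L^2(\Om)$ is immediate since $\mathcal{D}(\Om)\subset V$ is dense in $L^2(\Om)$, while symmetry and nonnegativity are clear from the expression above. Closedness follows because, by the equivalent norm recorded in Section~\ref{s:not}, the quantity $\mathcal{E}(u,u)+\|u\|_{L^2(\Om)}^2$ is (up to equivalence) the square of the norm of $V$, under which $V$ is complete and embeds continuously into $L^2(\Om)$. By the first representation theorem there is then a unique nonnegative self-adjoint operator $A$ on $L^2(\Om)$ with $D(A)\subset V$ and $\mathcal{E}(u,v)=(Au,v)_{L^2(\Om)}$ for all $u\in D(A)$, $v\in V$; since $A$ is self-adjoint and nonnegative, $-A$ generates a strongly continuous (indeed analytic) contraction semigroup $(e^{-tA})_{t\ge 0}$ on $L^2(\Om)$. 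A fractional Green/integration-by-parts identity then gives $\mathcal{E}(u,v)=\int_\Om ((-\Delta)^s u)\,v\,dx$, which identifies $A$ with $(-\Delta)_D^s$ as defined above.

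For the compact resolvent I would use that $\Om$ is bounded, so the embedding $V=\widetilde{W}_0^{s,2}(\Om)\hookrightarrow L^2(\Om)$ is compact (a fractional Rellich--Kondrachov theorem). Since for $\lambda>0$ the resolvent $(\lambda+A)^{-1}$ maps $L^2(\Om)$ boundedly into $V$ endowed with its form norm, composing with the compact embedding $V\hookrightarrow L^2(\Om)$ shows that $(\lambda+A)^{-1}$ is compact on $L^2(\Om)$; by the resolvent identity this persists for every $\lambda$ in the resolvent set.

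Finally, for the submarkovian property I would verify the Beurling--Deny criteria for $(\mathcal{E},V)$. Positivity preservation (first criterion) amounts to $u\in V\Rightarrow |u|\in V$ with $\mathcal{E}(|u|,|u|)\le\mathcal{E}(u,u)$, and $L^\infty$-contractivity (second criterion) to $u\in V\Rightarrow u\wedge 1\in V$ with $\mathcal{E}(u\wedge 1,u\wedge 1)\le\mathcal{E}(u,u)$. Both inequalities follow directly from the pointwise estimates $\big||u(x)|-|u(y)|\big|\le|u(x)-u(y)|$ and $|(u\wedge 1)(x)-(u\wedge 1)(y)|\le|u(x)-u(y)|$ together with nonnegativity of the kernel $|x-y|^{-N-2s}$. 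By the Beurling--Deny--Ouhabaz characterization, these two properties are equivalent to $(e^{-tA})_{t\ge 0}$ being positivity preserving and $L^\infty$-contractive, i.e. submarkovian. The only genuinely delicate point is the exact identification of the form operator with the pointwise definition of $(-\Delta)_D^s$ in Section~\ref{s:not}, which requires care with the singular integral and the exterior Dirichlet condition; the remaining steps are routine consequences of the form structure and the nonnegative kernel.
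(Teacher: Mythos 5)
The paper offers no proof of this proposition, stating it as well known and citing \cite{BWZ,ClWa,SV2}; your argument is precisely the standard Dirichlet-form proof contained in those references (closed nonnegative symmetric form on $\widetilde{W}_0^{s,2}(\Om)$, Kato's representation theorem, compact embedding for the resolvent, Beurling--Deny criteria for the submarkovian property). The proof is correct, and you rightly identify the only delicate step as matching the form operator with the pointwise definition of $(-\Delta)_D^s$ via the fractional integration-by-parts identity.
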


We refer to \cite{ClWa} for further qualitative properties of the semigroup $(e^{-t(-\Delta)_D^s})_{t\ge 0}$.
We conclude this section with the following observation.

\begin{remark}\label{rem-dsg}
{\em
The operator $(-\Delta)_D^s$ can be viewed as a bounded operator from $ \widetilde{W}_0^{s,2}(\Om)$ into $ \widetilde{W}^{-s,2}(\Om)$. In addition, we have that the operator  $-(-\Delta)_D^s$ generates a strongly continuous semigroup $(T(t))_{t\ge 0}$ on the space $ \widetilde{W}^{-s,2}(\Om)$. The semigroups $(T(t))_{t\ge 0}$ and $(e^{-t(-\Delta)_D^s})_{t\ge 0}$ coincide on $L^2(\Omega)$. Throughout the following, if there is no confusion we shall simply denote the semigroup $(T(t))_{t\ge 0}$ by $(e^{-t(-\Delta)_D^s})_{t\ge 0}$.
}
\end{remark}

\section{State equation and an equation with measure valued datum}
\label{s:statecont}

For the remainder of this section, unless otherwise stated, we assume that $\Omega\subset\RR^N$ ($N\ge 1$) is a bounded open set. For each result we shall clarify if a regularity on $\Omega$ is needed.
Moreover, given a Banach space $X$ and its dual $X^\star$, 
we shall denote by $\langle \cdot,\cdot\rangle_{X^\star,X}$ their duality pairing. 

The goal of this 
section is to establish the continuity of solutions to \eqref{eq:Sd} if the datum $z$ belongs to $L^r((0,T);L^p(\Om))$ 
with $p,r$ fulfilling \eqref{cond-p1} and to study the well-posdendess of \eqref{eq:Sd}
with $z \in (C(\overline{Q}))^\star = \mathcal{M}(\overline{Q})$. The latter space $\mathcal{M}(\overline{Q})$ 
denotes the space of all Radon measures on $\overline{Q}$ such that 
    \[
        \langle \mu , v \rangle_{(C(\overline{Q}))^\star,C(\overline{Q})} 
        = \int_{\overline{Q}} v\;d\mu , 
        \quad \mu \in \mathcal{M}(\overline{Q}), \quad v \in C(\overline{Q}) .
    \]
In addition, we have the following norm on this space:
    \[
        \|\mu\|_{\mathcal{M}(\overline{Q})} = \sup_{v \in C(\overline{Q}), \; |v|\le 1} \int_{\overline{Q}} v\;d\mu . 
    \] 

Towards this end, we notice that \eqref{eq:Sd} can be rewritten as the following Cauchy problem:
\begin{equation}\label{CP}
\begin{cases}
\partial_t u+(-\Delta)_D^s u=z\quad &\mbox{ in } Q,\\
u(0,\cdot)=0&\mbox{ in }\;\Omega.
\end{cases}
\end{equation}

Next we state the notion of weak solution to \eqref{CP}. 
 \begin{definition}[\bf Weak solution] 
 \label{def:weak_d}
    Let $z \in L^2((0,T);\widetilde{W}^{-s,2}(\Om))$. 
    A function \\$u \in \mathbb{U}_0 := L^2((0,T);\widetilde{W}^{s,2}_0(\Om)) \cap H^1((0,T);\widetilde{W}^{-s,2}(\Om))$ 
    is said to be a weak solution to \eqref{eq:Sd} if  $u(0,\cdot)=0$ a.e. in $\Omega$ and the equality
    \[
     \langle \partial_t u , v \rangle_{\widetilde{W}^{-s,2}(\Om),\widetilde{W}^{s,2}_0(\Om)} +
     \mathcal{E}(u,v)
         = \langle z , v \rangle_{\widetilde{W}^{-s,2}(\Om),\widetilde{W}^{s,2}_0(\Om)} , 
    \]
    holds, for every $v \in \widetilde{W}^{s,2}_0(\Om)$ and almost every $t \in (0,T)$. 
    Here 
    \begin{equation}\label{eq:E}
        \mathcal{E}(u,v) :=\frac{C_{N,s}}{2} 
        \int_{\RR^N}\int_{\RR^{N}} 
         \frac{(u(x)-u(y))(v(x)-v(y))}{|x-y|^{N+2s}} \;dxdy .
    \end{equation}
 \end{definition}
 
The existence of a weak solution to \eqref{eq:Sd} can be shown by using standard arguments.
In addition this weak solution belongs to $C([0,T];L^2(\Om))$.
From \cite[Proposition~3.3]{HAntil_DVerma_MWarma_2020a} we also recall that this weak solution $u$ can 
be written as follows.
 \begin{proposition}[\bf weak solution to \eqref{eq:Sd}]
 \label{prop:weak_Dir}
  Let  $z \in L^2((0,T);\widetilde{W}^{-s,2}(\Om))$. Then there exists a unique weak 
  solution $u \in \mathbb{U}_0$ to \eqref{eq:Sd} in the sense of 
  Definition~\ref{def:weak_d} and is given by
  \begin{align}\label{eq:uform}
  u(t,x)=\int_0^t e^{-(t-\tau)(-\Delta)_D^s}z(\tau,x)\;d\tau,
  \end{align}
  where $(e^{-t(-\Delta)_D^s})_{t\ge 0}$ is the semigroup mentioned in Remark \ref{rem-dsg}.
  In addition there is a constant $C>0$ such that
  \begin{align}\label{Es-DS-0}
   \|u\|_{\mathbb{U}_0} 
    \le C \|z\|_{L^2((0,T);\widetilde{W}^{-s,2}(\Om))}  .
  \end{align}
\end{proposition}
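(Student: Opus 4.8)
The plan is to recast \eqref{eq:Sd}, equivalently the Cauchy problem \eqref{CP}, as an abstract linear parabolic equation in the Gelfand triple
\[
V := \widetilde{W}^{s,2}_0(\Om) \hookrightarrow H := L^2(\Om) \hookrightarrow V^\star = \widetilde{W}^{-s,2}(\Om),
\]
with dense and continuous embeddings, and to apply the classical variational (Lions) theory for parabolic problems, afterwards identifying the resulting variational solution with the Duhamel formula \eqref{eq:uform}. First I would verify the two structural hypotheses on the form $\mathcal{E}$ of \eqref{eq:E}. Boundedness on $V\times V$ is immediate from the Cauchy--Schwarz inequality applied to the Gagliardo double integral. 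Coercivity is even more direct: one has $\mathcal{E}(v,v)=\tfrac{C_{N,s}}{2}\|v\|_{\widetilde{W}^{s,2}_0(\Om)}^2$, and since the Gagliardo seminorm over $\RR^N\times\RR^N$ is precisely the norm on $\widetilde{W}^{s,2}_0(\Om)$, the form is genuinely $V$-elliptic, so no G\aa rding shift is needed. With these two properties together with $z\in L^2((0,T);V^\star)$ and initial datum $0\in H$, the classical existence--uniqueness theorem for variational parabolic equations produces a unique $u\in L^2((0,T);V)\cap H^1((0,T);V^\star)=\mathbb{U}_0$ that satisfies the weak formulation of Definition~\ref{def:weak_d}; the initial condition $u(0,\cdot)=0$ makes sense because $\mathbb{U}_0\hookrightarrow C([0,T];H)$.

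The a priori estimate \eqref{Es-DS-0} I would obtain by the standard energy argument. Testing the equation with $u(t)$ yields
\[
\tfrac12\,\tfrac{d}{dt}\|u(t)\|_H^2+\mathcal{E}(u(t),u(t))=\langle z(t),u(t)\rangle_{V^\star,V},
\]
and after integrating in time, using coercivity on the left and Young's inequality on the right, one controls $\|u\|_{L^2((0,T);V)}$ by $\|z\|_{L^2((0,T);V^\star)}$. The bound on $\|\partial_t u\|_{L^2((0,T);V^\star)}$ is then read off directly from $\partial_t u=z-(-\Delta)_D^s u$, using that $(-\Delta)_D^s$ maps $V$ boundedly into $V^\star$ (Remark~\ref{rem-dsg}); adding the two contributions gives the $\mathbb{U}_0$-norm bound.

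It remains to establish the representation \eqref{eq:uform}. Since $-(-\Delta)_D^s$ generates the strongly continuous semigroup $(e^{-t(-\Delta)_D^s})_{t\ge0}$ on $V^\star$ that coincides on $L^2(\Om)$ with the semigroup of Proposition~\ref{semigroup} (Remark~\ref{rem-dsg}), the Duhamel integral $\int_0^t e^{-(t-\tau)(-\Delta)_D^s}z(\tau)\,d\tau$ is well defined as a $V^\star$-valued Bochner integral and gives the mild solution. I would show it agrees with the variational solution by a density argument: for smooth $z$, semigroup regularity produces a classical solution that is simultaneously mild and weak, and the uniform estimate \eqref{Es-DS-0} then allows passage to the limit for general $z\in L^2((0,T);V^\star)$; uniqueness from the first paragraph forces the two to coincide. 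The main obstacle is exactly this last identification --- confirming that the a priori only $V^\star$-valued Duhamel integral in fact lands in $V$ for a.e.\ $t$ with the required $L^2((0,T);V)$ integrability --- and the cleanest route is to inherit this regularity from the variational solution via the density/limit passage rather than to prove direct smoothing estimates for the fractional semigroup.
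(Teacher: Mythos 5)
Your argument is correct and is exactly the standard route the paper itself invokes without writing out: the paper states that existence follows ``by standard arguments'' and recalls the representation \eqref{eq:uform} from \cite[Proposition~3.3]{HAntil_DVerma_MWarma_2020a}, and those standard arguments are precisely your Lions variational theory on the Gelfand triple $\widetilde{W}^{s,2}_0(\Om)\hookrightarrow L^2(\Om)\hookrightarrow \widetilde{W}^{-s,2}(\Om)$ with the coercive form $\mathcal{E}$, the energy estimate, and the density-based identification with the Duhamel/mild solution of the semigroup from Remark~\ref{rem-dsg}. No gaps; your handling of the identification step (inheriting the $L^2((0,T);V)$ regularity of the mild solution from the variational solution rather than proving semigroup smoothing directly) is the clean way to do it.
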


We are now ready to prove Theorem \ref{thm:bdd}. 

    
    \begin{proof}[\bf Proof of Theorem \ref{thm:bdd}] 
Firstly, if $1\le p<\frac{N}{2s}$, then the result is a direct consequence of the semigroup property, the representation \eqref{eq:uform} of solutions and the Sobolev embedding \eqref{inj1}. Thus, we can assume without any restriction that $p\ge \frac{N}{2s}$.
We give the proof for $p> \frac{N}{2s}$. The case $p= \frac{N}{2s}$ follows similarly as the case $p> \frac{N}{2s}$.

Let then $p$ and $r$ fulfill \eqref{cond-p1} and assume that $p> \frac{N}{2s}$.
We notice that it follows from the embedding \eqref{inj1} that the submarkovian semigroup  $(e^{-t(-\Delta)_D^s})_{t\ge 0}$ is ultracontractive in the sense that it maps $L^1(\Omega)$ into $L^\infty(\Omega)$. More precisely, following line by line the proof of \cite[Theorem 2.16]{GW-CPDE}, or the proof of the abstract result in \cite[Lemma 6.5]{Ouha} (see also \cite[Chapter 2]{Dav}), we get that  for every $1\le p\le q\le\infty$, there exists a constant $C>0$ such that for every $f\in L^p(\Omega)$ and $t>0$ we have the estimate:
\begin{align}\label{Ultra-cont}
\|e^{-t(-\Delta)_D^s}f\|_{L^q(\Omega)}\le C e^{-\lambda_1\left(\frac 1p-\frac 1q\right)t}t^{-\frac{N}{2s}\left(\frac 1p-\frac 1q\right)}\|f\|_{L^p(\Omega)},
\end{align}
where $\lambda_1>0$ denotes the first eigenvalue of the operator $(-\Delta)_D^s$.

Next, applying \eqref{Ultra-cont} with $q=\infty$ and using the representation \eqref{eq:uform} of the solution $u$, we get that
\begin{align}\label{Ultra-cont-2}
\|u(t,\cdot)\|_{L^\infty(\Omega)}\le C\int_0^t e^{-\lambda_1\frac{(t-\tau)}p}(t-\tau)^{-\frac{N}{2sp}}\|z(\tau,\cdot)\|_{L^p(\Omega)} d\tau.
\end{align}
Using Young's convolution inequality, we get from \eqref{Ultra-cont-2} that
\begin{align}\label{Ultra-cont-3}
\|u\|_{L^\infty(Q)}\le C\|z\|_{L^r((0,T),L^p(\Omega))}\left(\int_0^T   e^{-\lambda_1\frac{rt}{p(r-1)}}t^{-\frac{Nr}{2sp(r-1)}}\;dt\right)^{\frac{r-1}{r}}.
\end{align}
If $1>\frac{N}{2sp}\frac{r}{r-1}$, that is, if $p$ and $r$ fulfill \eqref{cond-p1}, then the integral in the right hand side of \eqref{Ultra-cont-3} is convergent. The proof is finished.
    \end{proof}    

Under the assumption that $\Omega$ has the exterior cone condition, we obtain Corollary \ref{cor:ucont} as a direct consequence of Theorem \ref{thm:bdd}.

    
    \begin{proof}[\bf Proof of Corollary \ref{cor:ucont}]
        We prove the result in two steps. 
        
        {\bf Step 1}: Let $\lambda\ge 0$ be real number, $f\in L^2(\Omega)$ and consider the following elliptic Dirichlet problem:
        \begin{equation}\label{EDP}
        \begin{cases}
        (-\Delta)^su+\lambda u=f\;\;\,&\mbox{ in }\;\Omega\\
        u=0 &\mbox{ in }\;\mathbb R^N\setminus\Omega.
        \end{cases}
        \end{equation}
  By a weak solution of \eqref{EDP} we mean a function $u\in \widetilde W_0^{s,2}(\Omega)$ such that  the equality
  \begin{align*}
  \frac{C_{N,s}}{2}\int_{\mathbb R^N}\int_{\mathbb R^N}\frac{(u(x)-u(y))(v(x)-v(y))}{|x-y|^{N+2s}}\;dxdy+\lambda \int_{\Omega}uv\;dx=\int_{\Omega}fv\;dx,
  \end{align*}
  holds for every $v\in \widetilde W_0^{s,2}(\Omega)$.
  
  The existence and uniqueness of weak solutions to the Dirichlet problem \eqref{EDP} are a direct consequence of the classical Lax-Milgram theorem.  
  
In our recent work \cite{HAntil_DVerma_MWarma_2019b}, we have shown that, if $f\in L^p(\Omega)$ with $p> \frac{N}{2s}$ (see also \cite{RS-DP} for the case $p=\infty$), then every weak solution $u$ of the Dirichlet problem \eqref{EDP} belongs to $C_0(\Omega):=\{u\in C(\bOm):\; u=0\;\mbox{ on }\;\pOm\}$. Thus, the resolvent operator $R(\lambda,(-\Delta)_D^s)$ maps $L^p(\Omega)$ into $C_0(\Omega)$ for every $\lambda\ge 0$.
  In particular, this shows that for every $t>0$, the operator $e^{-t(-\Delta)_D^s}$ maps $L^p(\Omega)$ ($p>\frac{N}{2s}$) into the space $C_0(\Omega)$, that is, the semigroup $(e^{-t(-\Delta)_D^s})_{t\ge 0}$ has the strong Feller property. In addition, we have the following result which is interesting in its own, independently of the application given in this proof.

Let $(-\Delta)_{D,c}^s$ be the part of the operator $(-\Delta)_D^s$ in $C_0(\Omega)$, that is,
\begin{equation*}
\begin{cases}
D((-\Delta)_{D,c}^s):=\Big\{u\in D((-\Delta)_{D}^s)\cap C_0(\Omega):\; ((-\Delta)_{D}^su)|_{\Omega}\in C_0(\Omega)\Big\},\\
 (-\Delta)_{D,c}^su= ((-\Delta)_{D}^su)|_{\Omega}.
\end{cases}
\end{equation*}

 From the above properties of the resolvent operator and the semigroup, together with the fact that $D((-\Delta)_{D,c}^s)$ is dense in $C_0(\Omega)$, we can deduce that the operator $-(-\Delta)_{D,c}^s$ generates a strongly continuous semigroup $(e^{-t(-\Delta)_{D,c}^s})_{t\ge 0}$ on $C_0(\Omega)$. Thus, for every $f\in C_0(\Omega)$ we have that the function
 \begin{align*}
 u(t,x):=\int_0^te^{-(t-\tau)(-\Delta)_{D,c}^s}f(x)\;d\tau
 \end{align*}
 belongs to $C([0,\infty),C_0(\Omega))$. We can then deduce that 
 for every $z\in C([0,T];C_0(\Omega))$, the unique weak solution $u \in \mathbb{U}_0$ to \eqref{eq:Sd} given  by 
 \begin{align*}
 u(t,x):=\int_0^te^{-(t-\tau)(-\Delta)_{D,c}^s}z(\tau,x)\;d\tau
 \end{align*}
 belongs  to $C(\overline{Q})$.
        
{\bf Step 2}: Now, let $p$ and $r$ satisfy \eqref{cond-p1}.
        Since the space $C([0,T];C_0(\Om))$ is dense in $L^r((0,T);L^p(\Om))$, 
         we can construct a sequence $\{z_n\}_{n \in \mathbb{N}}$ such 
        that 
        \[
            \{z_n\}_{n \in \mathbb{N}} \subset C([0,T];C_0(\Om)) \quad \mbox{ and } \quad 
            z_n \rightarrow z \mbox{ in } L^r((0,T);L^p(\Om))
                \mbox{ as } n\rightarrow \infty . 
        \]
        Let $u_n \in \mathbb{U}_0$ be the weak solution to \eqref{eq:Sd} with datum $z_n$.                 
        It follows from Step 1 that $u_n \in C(\overline{Q})$. 
        Whence, subtracting the equations satisfied by $(u,z)$ and $(u_n,z_n)$ and using the 
        estimate \eqref{eq:Linfbnd} from Theorem~\ref{thm:bdd} we obtain that there is a constant $C>0$ such that for every $n\in\NN$ we have
        \[
            \|u-u_n\|_{L^\infty(Q)} \le C \|z-z_n\|_{L^r((0,T);L^p(\Om))} . 
        \]        
        As a result, we have that $u_n \rightarrow u$ in $L^\infty(Q)$ as $n\rightarrow \infty$. Since
        $u$ is the uniform limit on $Q$ of a sequence of continuous functions $\{u_n\}_{n\in \mathbb{N}}$ on $\overline{Q}$, it follows that
        $u$ is also continuous on $\overline{Q}$. The proof is finished. 
    \end{proof}

Now, throughout the rest of the paper, without any mention, we assume that $\Omega\subset\RR^N$ ($N\ge 1$) is a bounded open set with a Lipschitz continuous boundary and has the exterior ball condition. 

Next we study the well-posedness of the state equation \eqref{eq:Sd} 
where $z$ is a Radon measure and the initial condition $u(0,\cdot)$ is equal to a Radon measure. 
However, prior to this result, we need to introduce the notion of such solutions. We call them very-weak solutions. 
We refer to \cite{HAntil_DVerma_MWarma_2019b} for a similar notion for the fractional
elliptic problems and \cite{HAntil_RKhatri_MWarma_2019a,HAntil_DVerma_MWarma_2020a} for
fractional elliptic and parabolic problems with nonzero exterior conditions.  To motivate the need to discuss 
such solutions, we introduce the adjoint equation, which we will derive in the next section when discussing 
the first-order optimality conditions.  For $\mu \in \mathcal{M}(\overline{Q})$, we can write $\mu=\mu_{Q}+\mu_{\Sigma}+\mu_{T}+\mu_{0}$, where $\mu_{Q}=\mu\vert_{Q}$, $\mu_{\Sigma}=\mu\vert_{\Sigma}$, $\mu_{T}=\mu\vert_{\{T\} \times \overline{\Omega}}$ and $\mu_{0}=\mu\vert_{\{0\} \times \overline{\Omega}}$.
We then consider the following problem with $\mu_0 = 0$ and $\mu_\Sigma = 0$
\begin{align} \label{eq:adj}
	\begin{cases}
	-\partial_t \xi + (-\Delta)^s \xi 
	&= \mu_{Q}  \quad  \mbox{in } Q ,  \\
	\xi &=0 \quad  \mbox{in } \Sigma ,  \\
	\xi(T,\cdot) &= \mu_{T} \quad \mbox{in } \overline{\Om}. 
	\end{cases} 
\end{align}    
Note that at the final time the data for the adjoint variable is a measure on $\{T\} \times \overline{\Omega}$.  Now, making a change of variables $t\mapsto T-t$ in \eqref{eq:adj} yields, for $\tilde{\xi}(t,\cdot)=\xi(T-t,\cdot)$,
\begin{align} \label{eq:mod_adj}
	\begin{cases}
	\partial_t \tilde{\xi} + (-\Delta)^s \tilde{\xi} 
	&= \mu_{Q}  \quad \mbox{in } Q ,  \\
	\tilde{\xi} &=0 \quad \mbox{in } \Sigma ,  \\
	\tilde{\xi}(0,\cdot) &= \mu_{T} \quad \mbox{in } \overline{\Om}.
	\end{cases}  
\end{align}
This problem now resembles \eqref{eq:SD_init_meas}, and so we define the notion of very-weak solution in this context. 
    \begin{definition}[\bf very-weak solutions]
    \label{def:vwsoln}
        Let $z \in \mathcal{M}(\overline{Q})$ and  $p,r$ satisfy \eqref{cond-p1}. A function $u \in \Big(L^r((0,T);L^p(\Om))\Big)^\star$ 
        is said to be a very-weak solution
        to \eqref{eq:SD_init_meas} if the identity 
        \begin{alignat*}{3}
            \int_Q u \left(-\partial_t v + (-\Delta)^s v\right)\;dxdt 
             &= \int_{\overline Q} v dz\\
             &= \int_Q v dz_Q(t,x) + \int_{\overline{\Om}} v(0,x) dz_0(x) ,
        \end{alignat*}
        holds for every 
        $v \in \Big\{ C(\overline{Q}) \cap 
        \mathbb{U}_0 \; : \;
        v(T,\cdot) = 0 \mbox{ a.e. in } \Om, \ \left(-\partial_t + (-\Delta)^s \right) v 
            \in L^r((0,T);L^p(\Om))  \Big\}$. 
    \end{definition}
Now, we are ready to prove the existence and uniqueness of very-weak solutions as stated in Theorem \ref{thm:zM}. 

    
    \begin{proof}[\bf Proof of Theorem \ref{thm:zM}] We prove the theorem in three steps.
        
        {\bf Step 1}: Given $\zeta \in L^r((0,T);L^p(\Om))$ where $p,r$ fulfill \eqref{cond-p1}, 
        we begin by considering the following ``dual" problem 
       \begin{equation}\label{eq:Sd_dual}
          \begin{cases}
            -\partial_t w + (-\Delta)^s w &= \zeta \quad \mbox{in } Q, \\
                w &= 0           \quad \mbox{in } \Sigma ,  \\
                w(T,\cdot) & = 0 \quad \mbox{in } \Om . 
         \end{cases}                
       \end{equation} 
    After using semigroup theory as in Proposition~\ref{prop:weak_Dir}, we can deduce that \eqref{eq:Sd_dual} 
    has a unique weak solution $w \in \mathbb{U}_0$. In addition, from \eqref{eq:Sd_dual}
    we have that $\left(-\partial_t + (-\Delta)^s \right) w \in L^r((0,T);L^p(\Om))$.     
    It follows from Corollary~\ref{cor:ucont} that $w \in C(\overline{Q})$. 
    Thus $w$ is a valid ``test function" according to Definition~\ref{def:vwsoln}. 
    
   {\bf Step 2}: Towards this end, we define the map
    \[
    \begin{aligned}
        \Xi : L^r((0,T);L^p(\Om)) &\rightarrow C(\overline{Q}) \\
                                \zeta &\mapsto \Xi \zeta =: w .
    \end{aligned}    
    \]
    Due to Corollary~\ref{cor:ucont}, $\Xi$ is linear and continuous. 
    
    We are now ready to construct a unique $u$. We set $u := \Xi^* z$, then 
    $u \in \left(L^r((0,T);L^p(\Om))\right)^\star$, moreover $u$ solves \eqref{eq:Sd}
    according to Definition~\ref{def:vwsoln}. Indeed
    \begin{equation}\label{eq:bb}
        \int_Q u \zeta \;dxdt = \int_Q u \left(-\partial_t w + (-\Delta)^s w \right) \;dxdt
        = \int_Q (\Xi^*z) \zeta \;dxdt = \int_{\overline{Q}} w dz, 
    \end{equation}
    that is, $u$ is a solution of \eqref{eq:Sd} according to Definition~\ref{def:vwsoln} and we have shown the existence. 
    
    Next, we prove the uniqueness. Assume that \eqref{eq:Sd} has two very weak solutions $u_1$ and $u_2$ with the same right hand side datum $z$. Then it follows from \eqref{eq:bb} that 
     \begin{equation}\label{aa}
    \int_Q (u_1-u_2) \left(-\partial_t v + (-\Delta)^s v \right) \;dxdt
        = 0,
    \end{equation}
    for every $v \in \Big\{ C(\overline{Q}) \cap 
        \mathbb{U}_0 \; : \;
        v(T,\cdot) = 0 \mbox{ a.e. in } \Om, \ \left(-\partial_t + (-\Delta)^s \right) v 
            \in L^r((0,T);L^p(\Om))  \Big\}$. 
It follows from Step 1 that the mapping 
\[
	\mathbb U_0\cap C(\overline Q) \to L^r((0,T);L^p(\Om)):\;\; v\mapsto \left(-\partial_tv+(-\Delta)^sv\right)
\] 
is surjective. Thus, we can deduce from \eqref{aa} that
 \begin{align*}
  \int_Q (u_1-u_2) w \;dxdt
        = 0,     
\end{align*}  
for every $w\in L^r((0,T);L^p(\Om))$.
  Exploiting the fundamental lemma of the calculus of variations we can conclude from the preceding identity that $u_1-u_2=0$ a.e. in $\Omega$ and we have shown the uniqueness.
    
 {\bf Step 3}: It then remains to show the bound \eqref{eq:LqLpbnd}. It follows from \eqref{eq:bb} that 
    \begin{equation}\label{eq:bbbb}
        \left| \int_Q u \zeta \;dxdt \right| 
         \le \|z\|_{\mathcal{M}(\overline{Q})} \|w\|_{C(\overline{Q})}
         \le C \|z\|_{\mathcal{M}(\overline{Q})} \|\zeta\|_{L^r((0,T);L^p(\Om))} ,
    \end{equation}
    where in the last step we have used Corollary~\ref{cor:ucont}. Finally dividing both sides of the estimate \eqref{eq:bbbb} by 
    $\|\zeta\|_{L^r((0,T);L^p(\Om))}$ and taking the supremum over all functions
    $\zeta \in L^r((0,T);L^p(\Om))$ we obtain the desired result. The proof is complete. 
    \end{proof}

\section{Optimal control problem}
\label{s:ocp}

The main goal of this section is to establish well-posdeness of the optimal control problem 
\eqref{eq:dcp} and to derive the first order necessary optimality conditions. 
We start by equivalently rewriting the optimal control problem \eqref{eq:dcp} in terms of the constraints
\eqref{CP}. Recall that $(-\Delta)^s_D$ is the realization of $(-\Delta)^s$ in $L^2(\Om)$ with zero
Dirichlet exterior conditions and it is a self-adjoint operator. In terms of $(-\Delta)^s_D$, \eqref{eq:dcp} becomes 
    	\begin{equation} 
	\begin{aligned}
		&\min_{(u,z)\in (U,Z)} J(u,z) \\ 
		\text{subject to}& \\
		& \partial_t u + (-\Delta)_D^s u = z , \quad \mbox{in } Q \\
		&u(0,\cdot) = 0 \quad \mbox{in } \Om \\
		&u|_{Q} \in \mathcal{K} \quad \mbox{and} \quad 
		z \in Z_{ad}  .
	\end{aligned}	
	\end{equation}

We next define the appropriate function spaces. Let 
    \begin{equation*}
    \begin{aligned}
        Z &:= L^r((0,T);L^p(\Om)) ,  \quad \mbox{with $p,r$ as in \eqref{cond-p1}  
         but } 1 < p < \infty, \; 1 < r < \infty , \\
        U &:= 
          \{u\in \mathbb{U}_0 \cap C(\overline{Q}): (\partial_t + (-\Delta)^s_D) (u|_\Om) \in L^r((0,T);L^{p}(\Om))\}.
    \end{aligned}    
    \end{equation*}    
Here, $U$ is a Banach space with the graph norm 
\[
\|u\|_{U} := \|u\|_{\mathbb{U}_0}+\|u\|_{C(\overline{Q})} + \|(\partial_t + (-\Delta)^s_D) (u|_\Om)\|_{L^r((0,T);L^{p}(\Om))}.
\] 
We let $Z_{ad} \subset Z$ to be a nonempty, closed, and convex set and $\mathcal{K}$ as in \eqref{eq:Ud}.  We require the spaces $U$ and $Z$ to be reflexive.  This is needed to show the existence of solution to $\eqref{eq:dcp}$.

Next using Corollary~\ref{cor:ucont} we have that for every $z \in Z$ there is a unique $u \in U$ that solves
\eqref{eq:Sd}. As a result, the following control-to-state map 
    \[
        S : Z \rightarrow U , \quad z \mapsto Sz =: u 
    \]
is well-defined, linear, and continuous. Due to the continuous embedding of $U$ in $C(\overline{Q})$
we can in fact consider the control-to-state map as 
    \[
        E \circ S : Z \rightarrow C(\overline{Q}) ,
    \]
and we can define the admissible control set as 
    \[
        \widehat{Z}_{ad} := \left\{ z \in Z \; : \; z \in Z_{ad}, \ (E\circ S)z \in \mathcal{K} \right\} ,
    \]
and thus the reduced minimization problem is given by 
    \begin{equation}\label{eq:rocp}
        \min_{z \in \widehat{Z}_{ad}} \mathcal{J}(z):= J((E\circ S)z,z) . 
    \end{equation}        

Towards this end, we are ready to state the well-posedness of \eqref{eq:rocp} and equivalently
\eqref{eq:dcp}. 
    \begin{theorem}
        Let $Z_{ad}$ be a closed, convex, bounded subset of $Z$ and $\mathcal{K}$ a closed 
        and convex subset of $C(\overline{Q})$ such that $\widehat{Z}_{ad}$ is nonempty. 
        Moreover, let $J : L^2(Q) \times L^r((0,T);L^p(\Om)) 
        \rightarrow \RR$ be weakly lower-semicontinuous. Then \eqref{eq:rocp} has a solution. 
    \end{theorem}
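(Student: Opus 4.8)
The plan is to apply the direct method of the calculus of variations to the reduced problem \eqref{eq:rocp}. The first step is to record the structural properties of the feasible set $\widehat{Z}_{ad}$. Since $S:Z\to U$ is linear and continuous and the embedding $E:U\hookrightarrow C(\overline{Q})$ is continuous, the composite $E\circ S:Z\to C(\overline{Q})$ is linear and continuous; hence the preimage $(E\circ S)^{-1}(\mathcal{K})$ of the closed convex set $\mathcal{K}$ is a closed convex subset of $Z$. Consequently $\widehat{Z}_{ad}=Z_{ad}\cap (E\circ S)^{-1}(\mathcal{K})$ is an intersection of two closed convex subsets of $Z$, hence itself closed and convex. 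By Mazur's theorem a convex set is weakly closed if and only if it is strongly closed, so $\widehat{Z}_{ad}$ is weakly closed; it is also bounded, being contained in the bounded set $Z_{ad}$, and nonempty by hypothesis. This observation is the crux: it lets me carry out all compactness arguments in the reflexive space $Z$ and avoid weak convergence in the non-reflexive space $C(\overline{Q})$ altogether.

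Next I would invoke reflexivity of $Z$. A bounded, weakly closed subset of a reflexive Banach space is weakly sequentially compact, so any minimizing sequence has a weakly convergent subsequence whose limit remains in the set. Concretely, set $m:=\inf_{z\in\widehat{Z}_{ad}}\mathcal{J}(z)$ and choose a minimizing sequence $\{z_n\}\subset\widehat{Z}_{ad}$; after passing to a subsequence (not relabeled) I obtain $z_n\rightharpoonup\bar{z}$ in $Z$ with $\bar{z}\in\widehat{Z}_{ad}$.

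The third step transfers this weak convergence into the topology in which $J$ is posed. Since $J$ acts on $L^2(Q)\times L^r((0,T);L^p(\Om))$, I need $u_n:=Sz_n\rightharpoonup\bar{u}:=S\bar{z}$ weakly in $L^2(Q)$ together with $z_n\rightharpoonup\bar{z}$ in $L^r((0,T);L^p(\Om))$, the latter already being in hand. For the former, note that $U\hookrightarrow\mathbb{U}_0\hookrightarrow L^2((0,T);L^2(\Om))=L^2(Q)$ continuously, so $S:Z\to L^2(Q)$ is linear and continuous, and a bounded linear operator is continuous from the weak to the weak topology; hence $z_n\rightharpoonup\bar{z}$ forces $u_n\rightharpoonup\bar{u}$ in $L^2(Q)$. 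Weak lower-semicontinuity of $J$ then yields
\[
\mathcal{J}(\bar{z})=J(\bar{u},\bar{z})\le\liminf_{n\to\infty}J(u_n,z_n)=\liminf_{n\to\infty}\mathcal{J}(z_n)=m,
\]
so $\bar{z}\in\widehat{Z}_{ad}$ attains the infimum and is the desired solution. In particular $m>-\infty$, since a weakly sequentially lower-semicontinuous function on a weakly sequentially compact set is bounded below and attains its minimum.

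I expect the main obstacle to be precisely the state constraint, because $\mathcal{K}$ lives in $C(\overline{Q})$, which is not reflexive, so a direct attempt to extract a limit of $\{u_n\}$ in $C(\overline{Q})$ and pass to the limit inside $\mathcal{K}$ is delicate. The resolving device is to encode the state constraint as the weakly closed convex set $(E\circ S)^{-1}(\mathcal{K})\subset Z$, using convexity to upgrade the continuity-based (strong) closedness to weak closedness. The only remaining point requiring care is the compatibility of the several weak topologies in play --- in $Z$, in $U$, and in $L^2(Q)$ --- which is handled uniformly by the weak-to-weak continuity of continuous linear maps.
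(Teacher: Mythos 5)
Your proof is correct and follows the same direct-method strategy that the paper intends: the paper omits the argument and defers to the elliptic analogue (\cite[Theorem~4.1]{HAntil_DVerma_MWarma_2019b}), which rests on exactly the ingredients you use --- convexity plus Mazur to get weak closedness of $\widehat{Z}_{ad}$, reflexivity of $Z$ for weak sequential compactness, weak-to-weak continuity of the linear control-to-state map into $L^2(Q)$, and weak lower semicontinuity of $J$. Your observation that the non-reflexivity of $C(\overline{Q})$ is circumvented by pulling the state constraint back to the closed convex set $(E\circ S)^{-1}(\mathcal{K})\subset Z$ is precisely the point of the construction of $\widehat{Z}_{ad}$ in the paper.
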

    \begin{proof}
        The proof follows by using similar arguments as in the elliptic case, 
        see \cite[Theorem~4.1]{HAntil_DVerma_MWarma_2019b} and has been omitted for brevity. 
    \end{proof}

Next, we derive the first order necessary conditions under the following Slater condition.
    \begin{assumption}
        There is some control $\widehat{z} \in Z_{ad}$ such that the corresponding state $u$ fulfills 
        the strict state constraints
        \begin{equation}\label{eq:slater}
            u(t,x) < u_b(t,x) , \quad \forall (t,x) \in \overline{Q} . 
        \end{equation}
    \end{assumption}

Under this assumption, we have the following first order necessary optimality conditions.

    \begin{theorem}\label{thm:optcond}
        Let $J : L^2(Q) \times L^r((0,T);L^p(\Om)) \rightarrow \RR$ be continuously Fr\'echet differentiable
        and let \eqref{eq:slater} hold. Let $(\bar{u},\bar{z})$ be a solution to the optimization
        problem \eqref{eq:dcp}. Then there are Lagrange multipliers 
        $\bar\mu \in \mathcal{M}(\overline{Q})$ and $\bar\xi \in \left(L^r((0,T);L^p(\Om))\right)^\star$ such that
        \begin{subequations}
        \begin{align}
            &\partial_t \bar u + (-\Delta)^s_D \bar u = \bar{z} , \quad \mbox{in } Q, 
             \quad \bar u(0,\cdot) = 0, \mbox{ in } \Om  , \label{eq:a}  \\             
            &\begin{cases}
                -\partial_t \bar{\xi} + (-\Delta)^s_D \bar{\xi} 
                    &= J_u(\bar{u},\bar{z}) + {\bar\mu_Q} , \quad \mbox{in } Q , \\
                 \bar\xi(\cdot,T) &= {\bar\mu_T}, \quad \mbox{in } \overline{\Om} \label{eq:b},    
            \end{cases}    \\
             &   \langle \bar{\xi} + J_z(\bar{u},\bar{z}) , 
                    z - \bar{z} \rangle_{{L^r((0,T));L^{p}(\Om))^\star,L^r((0,T));L^{p}(\Om))}} 
                 \ge 0 , && \forall \;z \in Z_{ad}  \label{eq:c}  \\
              &\bar{\mu} \ge 0, \quad  \bar{u}(x) \le u_b(x) \mbox{ in } \overline{Q}, 
             \quad \mbox{and} \quad \int_{\overline{Q}} (u_b - \bar{u})\;d\bar\mu = 0 \label{eq:d} ,     
        \end{align} 
        where \eqref{eq:a} and \eqref{eq:b} are understood in the weak sense (see Definition~\ref{def:weak_d}) and very-weak sense (see Definition~\ref{def:vwsoln}), respectively. 
        \end{subequations}
    \end{theorem}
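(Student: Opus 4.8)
The plan is to read \eqref{eq:rocp} as a cone-constrained minimization problem over the convex set $Z_{ad}$ and to apply a generalized Lagrange multiplier (KKT) theorem, with the Slater condition \eqref{eq:slater} playing the role of the constraint qualification. Throughout I write $G := E\circ S : Z \to C(\overline{Q})$ for the linear, continuous control-to-state map of Section~\ref{s:statecont}, so that the state constraint $u|_Q \in \mathcal{K}$ becomes $Gz - u_b \in -C_+$, where $C_+ := \{ w \in C(\overline{Q}) : w \ge 0 \}$ is the cone of nonnegative continuous functions whose dual cone is exactly the cone of nonnegative Radon measures in $\mathcal{M}(\overline{Q}) = (C(\overline{Q}))^\star$.

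First I would verify the constraint qualification. Because $\widehat{z}\in Z_{ad}$ satisfies $G\widehat{z} < u_b$ pointwise on the compact set $\overline{Q}$, the element $G\widehat{z}-u_b$ lies in the interior of $-C_+$; combined with the convexity of $Z_{ad}$ this is precisely the Slater/Robinson--Zowe--Kurcyusz condition that guarantees existence of a multiplier. Applying the KKT theorem for cone-constrained problems in Banach spaces (as in \cite{KIto_KKunisch_2008a}) then produces a Lagrange multiplier $\bar\mu \in \mathcal{M}(\overline{Q})$ with $\bar\mu \ge 0$, the complementarity relation $\langle \bar\mu, u_b - G\bar{z}\rangle_{\mathcal{M}(\overline{Q}),C(\overline{Q})} = 0$, and the stationarity inequality
\[
\langle \mathcal{J}'(\bar{z}) + G^\star \bar\mu,\ z - \bar{z}\rangle_{Z^\star,Z} \ge 0, \qquad \forall z \in Z_{ad}.
\]
Since $G\bar z = \bar u|_Q$, this complementarity relation together with primal feasibility is exactly \eqref{eq:d}, and \eqref{eq:a} is just the state equation satisfied by the optimal pair.

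Next I would compute the reduced gradient by the chain rule: as $\mathcal{J}(z) = J(Sz,z)$ with $J$ continuously Fréchet differentiable, $\mathcal{J}'(\bar z) = S^\star J_u(\bar u,\bar z) + J_z(\bar u,\bar z)$, so stationarity reads $\langle S^\star J_u(\bar u,\bar z) + G^\star\bar\mu + J_z(\bar u,\bar z),\ z-\bar z\rangle \ge 0$. The crucial step is to represent $S^\star J_u(\bar u,\bar z) + G^\star\bar\mu$ by an adjoint state. I would split $\bar\mu = \bar\mu_Q + \bar\mu_T$ (the parts $\bar\mu_\Sigma$ and $\bar\mu_0$ are irrelevant since every state vanishes on $\Sigma$ and at $t=0$) and define $\bar\xi$ to be the very-weak solution of the backward adjoint equation \eqref{eq:b} with interior datum $J_u(\bar u,\bar z)+\bar\mu_Q$ and final datum $\bar\mu_T$; existence and uniqueness are furnished by Theorem~\ref{thm:zM} after the time-reversal $t\mapsto T-t$ reducing \eqref{eq:adj} to the forward form \eqref{eq:mod_adj}, noting that $J_u(\bar u,\bar z)\in L^2(Q)\subset\mathcal{M}(\overline Q)$ so the combined datum is admissible. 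Writing $v := S(z-\bar z)$, which satisfies $v(0,\cdot)=0$ and hence is an admissible test function for the (time-reversed) very-weak formulation of Definition~\ref{def:vwsoln}, the duality identity obtained by pairing the state equation for $v$ against $\bar\xi$ and using the self-adjointness of $(-\Delta)^s_D$ gives $\langle S^\star J_u(\bar u,\bar z)+G^\star\bar\mu,\ z-\bar z\rangle = \langle \bar\xi,\ z-\bar z\rangle_{Z^\star,Z}$, turning the stationarity inequality into \eqref{eq:c}.

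The main obstacle is making this duality argument rigorous with a measure-valued multiplier. Because $\bar\mu$ is only a Radon measure, $\bar\xi$ lives solely in $\big(L^r((0,T);L^p(\Om))\big)^\star$, so no pairing with $\bar\xi$ can be read as an ordinary integral and all of them must be interpreted in the very-weak sense of Definition~\ref{def:vwsoln}. The delicate point is to verify that $G^\star\bar\mu$ is faithfully represented by the split data $(\bar\mu_Q,\bar\mu_T)$ placed respectively in the PDE and in the final-time condition, and simultaneously that the forward states $S(z-\bar z)\in U$ qualify as test functions in that very-weak formulation; this is precisely the reason Theorem~\ref{thm:zM} and the very-weak solution framework were developed, and once this representation is secured the remaining ingredients (the chain rule, the existence of $\bar\xi$, and the KKT apparatus) are routine.
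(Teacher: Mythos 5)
Your argument is correct in outline, but it follows a genuinely different route from the paper. The paper keeps the full-space formulation (state and control as separate variables with the PDE as an equality constraint) and invokes the abstract KKT result \cite[Lemma~1.14]{MHinze_RPinnau_MUlbrich_SUlbrich_2009a} directly: the work there consists of checking that $\partial_t + (-\Delta)^s_D : U \to Z$ is bounded and surjective, that $\mathcal{K}$ has nonempty interior thanks to \eqref{eq:slater}, and that the linearized constraint admits a feasible point $(\hat u,\hat z)$, after which \emph{both} multipliers $\bar\mu$ and $\bar\xi$ (the latter being the multiplier attached to the equality constraint) come out of the lemma at once, with \eqref{eq:d} following from the discussion in \cite[Page 88]{MHinze_RPinnau_MUlbrich_SUlbrich_2009a}. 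You instead work in the reduced formulation over $\widehat{Z}_{ad}$, obtain only the measure multiplier $\bar\mu$ from the cone-constrained KKT theorem (your verification of the Robinson--Zowe--Kurcyusz condition via the strict inequality on the compact set $\overline{Q}$ is sound), and then \emph{construct} $\bar\xi$ by solving the backward measure-data problem via Theorem~\ref{thm:zM} and verifying the duality identity with $v = S(z-\bar z)$ as test function in Definition~\ref{def:vwsoln}; that identity does close, since $v(0,\cdot)=0$, $v$ vanishes on $\Sigma$, and $(\partial_t+(-\Delta)^s_D)v = z-\bar z \in L^r((0,T);L^p(\Om))$, which also justifies your remark that $\bar\mu_0$ and the lateral part of $\bar\mu$ drop out of \eqref{eq:c}. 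What the paper's route buys is brevity (one citation replaces your chain-rule and duality computations); what your route buys is transparency about exactly where Theorem~\ref{thm:zM} and the very-weak framework enter, and an explicit representation of the reduced gradient $S^\star J_u(\bar u,\bar z) + G^\star\bar\mu$ by the adjoint state, which the paper leaves implicit.
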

    \begin{proof}
        We will check the requirements of \cite[Lemma~1.14]{MHinze_RPinnau_MUlbrich_SUlbrich_2009a} to 
        complete the proof. Notice that $\partial_t + (-\Delta)^s_D : U \mapsto Z$ is bounded and 
        surjective. In addition, we have that the interior of $\mathcal{K}$ is nonempty due to 
        \eqref{eq:slater}. In order to apply \cite[Lemma~1.14]{MHinze_RPinnau_MUlbrich_SUlbrich_2009a}, 
        the only thing that remains to be shown is the existence of 
        $(\hat{u},\hat{z}) \in U \times Z$ such that
        \begin{equation}\label{eq:1}
            \partial_t (\hat{u}-\bar{u}) + 
            (-\Delta)^s_D (\hat{u}-\bar{u}) - (\hat{z}-\bar{z}) = 0  \;\;\mbox{ in }\;Q, \quad (\hat{u}-\bar{u}) = 0 \;\;\mbox{in }\; \Om .   
        \end{equation}
        Notice that for simplicity we have suppressed the initial condition. 
        We recall that $(\bar{u},\bar{z})$ solves the state equation, as a result we obtain that 
        \begin{equation}\label{eq:2}
            \partial_t \hat{u} + 
            (-\Delta)^s_D \hat{u} = \hat{z}  \;\;\mbox{ in }\;Q,
            \quad \hat{u} = 0 \;\;\mbox{in }\; \Om 
        \end{equation}
        Since for every $\hat{z} \in Z_{ad}$, there is a unique $\hat{u}$ that
        solves \eqref{eq:2}, in particular $(\hat{u},\hat{z})$ works. Then using 
        \cite[Lemma~1.14]{MHinze_RPinnau_MUlbrich_SUlbrich_2009a} we obtain 
        \eqref{eq:a}--\eqref{eq:c}. Moreover, \eqref{eq:d} follows from 
        \cite[Lemma~1.14]{MHinze_RPinnau_MUlbrich_SUlbrich_2009a} and the 
        discussions given in  \cite[Page 88]{MHinze_RPinnau_MUlbrich_SUlbrich_2009a}. 
        The proof is complete. 
    \end{proof}

\section{Moreau-Yosida Regularization of Optimal Control Problem}
\label{s:rocp}

The purpose of this section is to study the regularized optimal control problem using the well-known Moreau-Yosida regularization and to show that the regularized problem is an approximation to the original problem. We refer to \cite{antil2019moreau} for the elliptic case. From hereon we will assume that $u_b\equiv 0$, however all the results follow for the general case with slight modifications.
The Moreau-Yosida regularized optimal control problem is given by
\begin{subequations}\label{eq:regdcp}
	\begin{equation}\label{eq:regJd}
	\min J^\ga(u,z):=J(u,z)+ \frac{1}{2 \ga} \|(\hat{\mu}+\ga u)_+\|^2_{L^2(Q)},
	\end{equation}
	subject to the fractional parabolic PDE: Find $u \in U$ solving 
	\begin{equation}
	\begin{cases}
	\partial_t u + (-\Delta)^s_D u &= z \quad \mbox{in } Q, \\
	u(0,\cdot) &= 0 \quad \mbox{in } \Omega  ,
	\end{cases}                
	\end{equation} 
	with 
	\begin{equation}\label{eq:regZd}
	z \in Z_{ad}, 
	\end{equation}
\end{subequations}
where $0\leq \hat{\mu}\in L^2(Q)$ is the realization of the Lagrange multiplier $\bar{\mu}$ and $\ga >0$ denotes the regularization parameter. Here, $(\cdot)_+$ denotes max$\{0,\cdot\}$. More information about this can be found in \cite{KIto_KKunisch_2008a}.
From hereon, we will use a particular cost functional $J$, given by,
\[ J(u,z):=\frac{1}{2}\|u-u_d\|_{L^2{(Q)}}^2 + \frac{\alpha}{2} \|z\|^2_{L^2(Q)}, \] 
and we choose the relevant function spaces as,
\begin{equation*}
\begin{aligned}
Z &:= L^r((0,T);L^p(\Om)) ,  \quad \mbox{with $p,r$ as in \eqref{cond-p1}  
	but } 2 \le p < \infty, \; 2 \le r < \infty , \\
U &:= 
\{u\in \mathbb{U}_0 \cap C(\overline{Q}): (\partial_t + (-\Delta)^s_D) (u|_\Om) \in L^r((0,T);L^{p}(\Om))\}.
\end{aligned}    
\end{equation*}    
Then, again using the control-to-state mapping $S$, \eqref{eq:regdcp} can be reduced to
\begin{equation}\label{eq:regrpDir}
\min_{z \in Z_{ad}} \mathcal{J}^{\ga}(z) :=\mathcal{J}(z)+\frac{1}{2 \ga} \|(\hat{\mu}+\ga {S}z)_+\|^2_{L^2(Q)}. 
\end{equation}
Existence and uniqueness of solution to the problem \eqref{eq:regdcp} can be done using the
direct method, see for instance \cite[Theorem~4.1]{HAntil_DVerma_MWarma_2019b}. 

Moreover, we have the following first order necessary and sufficient optimality conditions. 

\begin{theorem}\label{thm:optcondReg}
	Let $J^\ga : L^2(Q) \times L^r((0,T);L^p(\Om)) \rightarrow \RR$ be continuously Fr\'echet differentiable
	and let $(\bar{u}^\ga,\bar{z}^\ga)$ be a solution to the regularized optimization
	problem \eqref{eq:regdcp}. Then there exists a Lagrange multiplier $\bar\xi ^\ga \in \mathbb{U}_0$ such that 
	\begin{subequations}
		\begin{align}
		&\begin{cases} \label{eq:rega}
		\partial_t \bar u ^\ga + (-\Delta)^s_D \bar u ^\ga & = \bar{z} ^\ga , \quad \mbox{in } Q,\\
		\bar u ^\ga(0,\cdot) &= 0, \quad \mbox{ in } \Om ,
		\end{cases} 
		 \\             
		&\begin{cases}\label{eq:regb}
		-\partial_t \bar{\xi} ^\ga+ (-\Delta)^s_D \bar{\xi} ^\ga 
		&= \bar{u} ^\ga - u_d + (\hat{\mu} +\ga \bar u ^\ga)_+ , \quad \mbox{in } Q , \\
		\bar\xi^\ga(T,\cdot) &= 0 \quad \mbox{in } \Om ,    
		\end{cases}    \\
		&   \langle \bar{\xi}^\ga + \alpha \bar{z} ^\ga ,z - \bar{z}^\ga \rangle_{L^2(Q))} 
		\ge 0 , \quad \forall \;z \in Z_{ad} \label{eq:regc} .
		\end{align}
	\end{subequations} 
\end{theorem}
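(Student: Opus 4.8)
The plan is to exploit the fact that, in contrast to the original problem \eqref{eq:dcp}, the regularized problem \eqref{eq:regdcp} carries \emph{no} explicit state constraint: the pointwise bound $u \le u_b \equiv 0$ has been absorbed into the objective through the Moreau-Yosida penalty. Hence the only remaining constraint is the convex set $Z_{ad}$, and the reduced functional $\mathcal{J}^\ga$ in \eqref{eq:regrpDir} is convex (the tracking term $J$ is convex, $S$ is linear and continuous by Corollary~\ref{cor:ucont}, and $z \mapsto \tfrac{1}{2\ga}\|(\hat{\mu} + \ga Sz)_+\|^2_{L^2(Q)}$ is the composition of the convex nondecreasing scalar map $\tfrac{1}{2\ga}(\cdot)_+^2$ with an affine map). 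Therefore the first-order variational inequality will be \emph{both} necessary and sufficient, which is exactly why no Slater-type constraint qualification is needed here, and why the multiplier $\bar\xi^\ga$ lands in $\mathbb{U}_0$ rather than in a space of measures.

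First I would record the standard characterization of a minimizer of a Fréchet-differentiable convex functional over a convex set: $\bar z^\ga$ solves \eqref{eq:regrpDir} if and only if $\langle (\mathcal{J}^\ga)'(\bar z^\ga), z - \bar z^\ga \rangle \ge 0$ for all $z \in Z_{ad}$. By the chain rule, using that $S$ is linear and that $u \mapsto \tfrac{1}{2\ga}\|(\hat{\mu} + \ga u)_+\|_{L^2(Q)}^2$ is Fréchet differentiable from $L^2(Q)$ to $\RR$ with derivative $h \mapsto \big((\hat{\mu} + \ga u)_+, h\big)_{L^2(Q)}$, I would compute, writing $\bar u^\ga = S\bar z^\ga$,
$$\langle (\mathcal{J}^\ga)'(\bar z^\ga), h \rangle = \big(\bar u^\ga - u_d, Sh\big)_{L^2(Q)} + \big((\hat{\mu} + \ga \bar u^\ga)_+, Sh\big)_{L^2(Q)} + \alpha\,(\bar z^\ga, h)_{L^2(Q)}.$$

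Next I would introduce the adjoint state $\bar\xi^\ga$ as the weak solution of the backward problem \eqref{eq:regb}. The key point is that its right-hand side lies in $L^2(Q)$: indeed $\bar u^\ga - u_d \in L^2(Q)$, and since $\bar u^\ga \in L^\infty(Q)$ by Theorem~\ref{thm:bdd} and $\hat{\mu} \in L^2(Q)$, also $(\hat{\mu} + \ga \bar u^\ga)_+ \in L^2(Q)$; so after the time reversal $t \mapsto T-t$, Proposition~\ref{prop:weak_Dir} yields a unique $\bar\xi^\ga \in \mathbb{U}_0$. Testing the state equation for $Sh$ against $\bar\xi^\ga$, integrating by parts in time and using the self-adjointness of $(-\Delta)^s_D$, the boundary contributions vanish because $Sh(0,\cdot) = 0$ and $\bar\xi^\ga(T,\cdot) = 0$, which gives the duality identity
$$\big(\bar u^\ga - u_d + (\hat{\mu} + \ga \bar u^\ga)_+, Sh\big)_{L^2(Q)} = (\bar\xi^\ga, h)_{L^2(Q)}.$$
Substituting this into the formula above collapses the derivative to $(\bar\xi^\ga + \alpha \bar z^\ga, h)_{L^2(Q)}$, so the variational inequality reads exactly \eqref{eq:regc}, while \eqref{eq:rega} is the state equation satisfied by $\bar u^\ga$ and \eqref{eq:regb} is the definition of $\bar\xi^\ga$.

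The main obstacle, and the only genuinely delicate point, is justifying the derivative formula for the Moreau-Yosida penalty on $L^2(Q)$. The pointwise Nemytskii operator $u \mapsto (u)_+$ is merely Lipschitz and not Fréchet differentiable, so one cannot differentiate "inside" directly; instead one uses that the composed scalar function $t \mapsto \tfrac{1}{2}(t)_+^2$ is continuously differentiable with globally Lipschitz derivative, which makes the integral functional $C^1$ with the stated derivative (this is precisely what underlies the differentiability hypothesis on $J^\ga$). Once this is in hand, everything else is routine Lagrangian and adjoint bookkeeping, and the convexity noted above upgrades the necessary condition to a sufficient one.
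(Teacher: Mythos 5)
Your proposal is correct and follows essentially the route the paper intends: the paper omits this proof, referring to the elliptic analogue in \cite{HAntil_DVerma_MWarma_2019b}, and that argument is precisely the one you give --- reduce to a variational inequality over the convex set $Z_{ad}$ (no constraint qualification needed since the state constraint has been penalized), compute $(\mathcal{J}^\ga)'$ via the $C^1$ differentiability of the integral functional built from $t\mapsto\tfrac12(t)_+^2$, and collapse the derivative using the adjoint state $\bar\xi^\ga\in\mathbb{U}_0$, which exists by Proposition~\ref{prop:weak_Dir} after time reversal because the right-hand side of \eqref{eq:regb} lies in $L^2(Q)$. Your identification of the Nemytskii differentiability issue as the only delicate point, and your observation that convexity makes the conditions sufficient, both match the paper's framing.
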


\begin{proof}
	The proof is similar to the proof of \cite[Theorem~4.2]{HAntil_DVerma_MWarma_2019b} and has been omitted for brevity. 
\end{proof}

Next, let us begin the analysis to show that the regularized problem \eqref{eq:regdcp} is indeed an approximation to the original problem \eqref{eq:dcp}. We modify the approach of \cite{antil2019moreau} and apply it to the parabolic case. Let us start by deriving a uniform bound on the regularization term. Observe that for $\ga\ge 1$,
\begin{align} \label{eq:rel}
\begin{aligned}
\mathcal{J}(\bar{z}^\ga)\le \mathcal{J}^\ga(\bar{z}^\ga)\le \mathcal{J}^\ga(\bar{z})
&\le \mathcal{J}(\bar{z})+\frac{1}{2 \ga} \|\hat{\mu}\|^2_{L^2(Q)} \\
&\le \mathcal{J}(\bar{z})+\frac{1}{2} \|\hat{\mu}\|^2_{L^2(Q)} \eqqcolon C_{\bar{z}} .
\end{aligned}	
\end{align}

Using \eqref{eq:rel} we get that $\frac{1}{2 \ga} \|(\hat{\mu}+\ga \bar{u}^\gamma)_+\|^2_{L^2(Q)}$ is uniformly bounded. Also from \eqref{eq:rel} we have that 
\begin{align*}
\|(\bar{u}^\ga)_+\|^2_{L^2(Q)}
&\le \frac{2}{\ga} \left(\mathcal{J}(\bar{z})-\mathcal{J}(\bar{z}^\ga)+\frac{1}{2\ga} \|\hat{\mu}\|^2_{L^2(Q)}\right) .
\end{align*}
Thus 
\begin{align} \label{uplusbnd}
\|(\bar{u}^\ga)_+\|_{L^2(Q)}\le \om(\ga^{-1})\ga^{-\frac{1}{2}},
\end{align}
where 
\[ 
\om(\ga^{-1}) := 2\text{ max }\left(\frac{1}{2\ga} \|\hat{\mu}\|^2_{L^2(Q)},\left(\mathcal{J}(\bar{z})-\mathcal{J}(\bar{z}^\ga)\right)_+ \right)^{1/2}.
\]
Since $\bar{z}^\ga\rightarrow \bar{z}$ strongly in $L^2(Q)$ as $\gamma \to\infty$ (by \cite[Proposition~2.1]{hintermuller_kunisch}) and $\mathcal{J}$ is continuous, we obtain that $\om (\gamma^{-1})\downarrow0$ as $\gamma \to \infty$.
Moreover, from \eqref{eq:rel} we can deduce that $\mathcal{J}(\bar{z}^\ga)\le C_{\bar{z}}$ which yields
\begin{align} \label{regsolbound}
\text{max}\left( \|\bar{u}^\ga - u_d\|^2_{L^2(Q)}, \alpha \|\bar{z}^\ga\|^2_{L^2(Q)}  \right) \le 2 C_{\bar{z}}. 
\end{align}
Then from \eqref{eq:regb}, along with \eqref{uplusbnd} and \eqref{regsolbound},
we obtain that there is a constant $C>0$ independent of $\ga$ such that
\begin{align*} 
\|\bar{\xi}^\ga\|_{\mathbb{U}_0}
& \le C\left(\|\bar{u}^\ga - u_d\|_{L^2(Q)} + \|\hat{\mu}\|_{L^2(Q)}+ \gamma\|(\bar{u}^\ga)_+\|_{L^2(Q)} \right) \\
& \le C\left(2+\om(\ga^{-1})\sqrt{\ga}) \right).
\end{align*} 

We now estimate the distance between $(\bar{u},\bar{z})$ and $(\bar{u}^{\ga},\bar{z}^\ga)$. 

\begin{theorem} \label{distance}
	Let	$(\bar{u},\bar{z})$ and $(\bar{u}^{\ga},\bar{z}^\ga)$ denote the solutions of \eqref{eq:dcp} and \eqref{eq:regdcp}, respectively. Then,
	\begin{align} \label{eq:distance}
	\alpha \|\bar{z}-\bar{z}^\ga\|^2_{L^2(Q)} &+ \|\bar{u}-\bar{u}^\ga\|^2_{L^2(Q)}+\ga \|(\bar{u}^\ga)_+\|^2_{L^2(Q)} \nonumber \\
	&\le \frac{1}{\ga} \|\hat{\mu}\|^2_{L^2(Q)}+
	\left\langle \bar{\mu},\bar{u}^\ga \right\rangle _{\mathcal{M}(\overline{Q}),C(\overline{Q})},
	\end{align}
	and hence,
	\begin{align} \label{eq:violation}
	\|(\bar{u}^\ga)_+\|_{L^2(Q)}\le \sqrt{\frac{2}{\ga}} \max \left( \frac{1}{\ga}\|\hat{\mu}\|^2_{L^2(Q)},\left\langle \bar{\mu},(\bar{u}^\ga)_+ \right\rangle _{\mathcal{M}(\overline{Q}),C(\overline{Q})}\right)^{\frac 12}.
	\end{align}
\end{theorem}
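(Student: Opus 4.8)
The plan is to compare the two optimality systems, using the differences of states $\delta u := \bar u^\ga - \bar u$ and controls $\delta z := \bar z^\ga - \bar z$ as test functions. With the explicit cost $J$ we have $J_u = \bar u - u_d$ and $J_z = \alpha\bar z$, so the original variational inequality \eqref{eq:c} reads $\langle \bar\xi + \alpha\bar z, z - \bar z\rangle \ge 0$ and the regularized one \eqref{eq:regc} reads $\langle \bar\xi^\ga + \alpha\bar z^\ga, z - \bar z^\ga\rangle \ge 0$. First I would insert $z = \bar z^\ga$ into the former and $z = \bar z$ into the latter and add them; after the cancellations this produces $\alpha\|\delta z\|^2_{L^2(Q)} \le \langle \bar\xi - \bar\xi^\ga, \delta z\rangle$, which reduces the whole estimate to evaluating this pairing.

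Next I would evaluate $\langle\bar\xi,\delta z\rangle$ and $\langle\bar\xi^\ga,\delta z\rangle$ separately. Subtracting the state equations \eqref{eq:a} and \eqref{eq:rega} shows that $\delta u$ solves $\partial_t\delta u + (-\Delta)^s_D\delta u = \delta z$ with $\delta u(0,\cdot)=0$; since $\bar u,\bar u^\ga \in U \subset C(\overline Q)\cap\mathbb{U}_0$, the difference $\delta u$ is continuous and has vanishing initial trace, hence is an admissible test function in Definition~\ref{def:vwsoln}. Testing the very-weak formulation of the adjoint equation \eqref{eq:b} against $\delta u$, and using $\bar\mu_0=\bar\mu_\Sigma=0$ so that only $\bar\mu_Q$ and $\bar\mu_T$ survive, yields $\langle\bar\xi,\delta z\rangle = \langle\bar u - u_d,\delta u\rangle_{L^2(Q)} + \langle\bar\mu,\delta u\rangle_{\mathcal{M}(\overline Q),C(\overline Q)}$. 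For the regularized adjoint \eqref{eq:regb}, which lives in $\mathbb{U}_0$, an ordinary integration by parts in time (the boundary terms vanish since $\delta u(0,\cdot)=0$ and $\bar\xi^\ga(T,\cdot)=0$) together with the self-adjointness of $(-\Delta)^s_D$ gives $\langle\bar\xi^\ga,\delta z\rangle = \langle\bar u^\ga - u_d + (\hat\mu+\ga\bar u^\ga)_+,\delta u\rangle_{L^2(Q)}$.

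Subtracting these, and using $\langle\bar u - u_d,\delta u\rangle - \langle\bar u^\ga - u_d,\delta u\rangle = -\|\delta u\|^2_{L^2(Q)}$, I obtain $\alpha\|\delta z\|^2 + \|\delta u\|^2 \le \langle\bar\mu,\delta u\rangle - \langle(\hat\mu+\ga\bar u^\ga)_+,\delta u\rangle$. Now I would bring in the complementarity condition \eqref{eq:d}, which with $u_b\equiv 0$ gives $\langle\bar\mu,\bar u\rangle = 0$, so that $\langle\bar\mu,\delta u\rangle = \langle\bar\mu,\bar u^\ga\rangle$. For the remaining term I write $\delta u = \bar u^\ga - \bar u$ and use $\bar u \le 0$ together with $(\hat\mu+\ga\bar u^\ga)_+ \ge 0$ to drop the $\bar u$ contribution, leaving $-\langle(\hat\mu+\ga\bar u^\ga)_+,\delta u\rangle \le -\langle(\hat\mu+\ga\bar u^\ga)_+,\bar u^\ga\rangle$. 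The crux is then the elementary pointwise inequality $-(\hat\mu+\ga v)_+\,v + \ga(v_+)^2 \le \tfrac{1}{\ga}\hat\mu^2$ for a.e.\ value $v$ (indeed $\le \tfrac{1}{4\ga}\hat\mu^2$), verified by distinguishing the signs of $v$ and of $\hat\mu+\ga v$; integrating it over $Q$ and adding $\ga\|(\bar u^\ga)_+\|^2$ to both sides delivers exactly \eqref{eq:distance}.

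Finally, \eqref{eq:violation} follows by discarding the nonnegative terms $\alpha\|\delta z\|^2$ and $\|\delta u\|^2$ in \eqref{eq:distance}, bounding $\langle\bar\mu,\bar u^\ga\rangle \le \langle\bar\mu,(\bar u^\ga)_+\rangle$ (valid since $\bar\mu\ge 0$ and $\bar u^\ga \le (\bar u^\ga)_+$), applying $a+b \le 2\max(a,b)$, and taking square roots. I expect the main obstacle to be the rigorous justification of the duality computation for $\bar\xi$: because $\bar\mu$ is only a Radon measure, $\bar\xi$ merely lies in $(L^r((0,T);L^p(\Om)))^\star$, so $\langle\bar\xi,\delta z\rangle$ must be interpreted through the very-weak formulation of Definition~\ref{def:vwsoln}, and one must carefully check that $\delta u$ possesses precisely the regularity of an admissible test function (continuity up to $\overline Q$ and a vanishing initial trace), which is where Corollary~\ref{cor:ucont} and the membership $\bar u,\bar u^\ga\in U$ are essential.
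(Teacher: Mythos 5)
Your proposal is correct and follows essentially the same route as the paper: swapping $\bar z^\ga$ and $\bar z$ in the two variational inequalities, converting $\langle\bar\xi-\bar\xi^\ga,\bar z^\ga-\bar z\rangle$ via the state/adjoint duality (with $\delta u$ as the very-weak test function, which you rightly verify is admissible), invoking the complementarity $\langle\bar\mu,\bar u\rangle=0$ and the sign of $\int_Q(\hat\mu+\ga\bar u^\ga)_+\bar u$, and closing with the quadratic estimate. The only (cosmetic) difference is that you establish the final bound through the pointwise inequality $-(\hat\mu+\ga v)_+v+\ga(v_+)^2\le\tfrac{1}{\ga}\hat\mu^2$, whereas the paper argues via the set inclusion $Q_\ga^+(\hat\mu)\supseteq Q_\ga^+(0)$ — the same computation in different clothing.
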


\begin{proof}
	We star by using the optimality conditions \eqref{eq:c} and \eqref{eq:regc} with $z=\bar{z}^\ga$ and $z=\bar{z}$, respectively. This yields
	\begin{align}
	\alpha \|\bar{z}-\bar{z}^\ga\|^2_{L^2(Q)} 
	&\le \int_{Q} (\bar{z}-\bar{z}^\ga)(\bar{\xi}^\ga - \bar{\xi}) \;dxdt
	. \label{eq:4a}
	\end{align}		 
	Next, we use the state equations \eqref{eq:a} and \eqref{eq:rega} to get that, for every $v \in \left(L^r((0,T);L^p(\Omega))\right)^\star$,
	\begin{align}\label{eq:4b}
	\langle \partial_t(\bar{u} - \bar{u}^\ga) +& (-\Delta)^s_D(\bar{u} - \bar{u}^\ga), v \rangle_{L^r((0,T);L^p(\Omega)),\left(L^r((0,T);L^p(\Omega))\right)^\star} \notag\\
	=& \langle \bar{z} - \bar{z}^\ga, v \rangle_{L^r((0,T);L^p(\Omega)),\left(L^r((0,T);L^p(\Omega))\right)^\star} .
	\end{align}
	Recall that both $\bar{\xi} \in \left(L^r((0,T);L^p(\Omega))\right)^\star$ and $\bar{\xi}^\ga \in L^2((0,T);\widetilde{W}^{s,2}_0(\Omega)) \hookrightarrow \left(L^r((0,T);L^p(\Omega))\right)^\star$ due to the fact that $\left(L^r((0,T);L^p(\Omega))\right)^\star\cong L^{r'}((0,T);L^{p'}(\Omega))$ 
	(\cite[Theorem 1.3.10]{HyNeVeWe2016}, for example) and Remark~\ref{remark} (iii), so we can set $v := \bar{\xi}^\ga - \bar{\xi}$ in \eqref{eq:4b}. Subsequently, substituting \eqref{eq:4b} in \eqref{eq:4a}, and using \eqref{eq:b} and \eqref{eq:regb}, along with 
	$\bar{u}, \bar{u}^\gamma \in U$, we obtain 
	\begin{align}\label{eq:4c}
	\alpha \|\bar{z}-\bar{z}^\ga\|^2_{L^2(Q)} 
	&\le \langle \partial_t(\bar{u} - \bar{u}^\ga) + (-\Delta)^s_D(\bar{u} - u^\ga), \bar{\xi}^\ga - \bar{\xi} \rangle_{L^r((0,T);L^p(\Omega)),L^r((0,T);L^p(\Omega))^\star} \nonumber  \\
	&= -\|\bar{u} - \bar{u}^\ga \|_{L^2(Q)}^2 +\int_{Q} (\hat{\mu}+\ga \bar{u}^\ga)_+(\bar{u}-\bar{u}^\ga)\;dxdt - \left\langle \bar{\mu},\bar{u}-\bar{u}^\ga \right\rangle _{\mathcal{M}(\overline{Q}),C(\overline{Q})} \nonumber \\
	\implies \alpha \|\bar{z}-\bar{z}^\ga\|^2_{L^2(Q)} + &\|\bar{u} - \bar{u}^\ga \|_{L^2(Q)}^2 \le \left\langle \bar{\mu},\bar{u}^\ga \right\rangle _{\mathcal{M}(\overline{Q}),C(\overline{Q})} + 
	\int_{Q} (\hat{\mu}+\ga \bar{u}^\ga)_+(\bar{u}-\bar{u}^\ga) \;dxdt\nonumber \\
	&\hspace{2.37cm}\le \left\langle \bar{\mu},\bar{u}^\ga \right\rangle _{\mathcal{M}(\overline{Q}),C(\overline{Q})} + 
	\int_{Q} (\hat{\mu}+\ga \bar{u}^\ga)_+(-\bar{u}^\ga)\;dxdt  , 
	\end{align}
	where we have used that $\left\langle \bar{\mu},\bar{u} \right\rangle _{\mathcal{M}(\overline{Q}),C(\overline{Q})} = 0$ from \eqref{eq:d} and $\int_{Q} (\hat{\mu}+\ga \bar{u}^\ga)_+(\bar{u})\;dxdt \le 0$.\\
	Next, let 
	\begin{equation}\label{eq:4f}
	Q_{\ga}^+(\hat{\mu})\coloneqq \{(t,x)\in Q \; : \;\hat{\mu}+\ga \bar{u}^\ga >0\}.
	\end{equation}
	Then 
	$$ \int_{Q \setminus Q_{\ga}^+(\hat{\mu})} 
	(\hat{\mu}+\ga \bar{u}^\ga)_+(-\bar{u}^\ga) \;dxdt= 0.$$
	 This allows us to write 
	\begin{align}\label{eq:4e}
	\int_{Q} (\hat{\mu}+\ga \bar{u}^\ga)_+(-\bar{u}^\ga)\;dxdt
	&= \int_{Q_{\ga}^+(\hat{\mu})} (\hat{\mu}+\ga \bar{u}^\ga)(-\bar{u}^\ga)\;dxdt \nonumber \\
	&\le - \ga \|(\bar{u}^\ga)_+\|^2_{L^2(Q)}  
	+ \frac{1}{\ga} \|\hat{\mu}\|^2_{L^2(Q)},	 	 
	\end{align}
	where the last inequality follows from the fact that
	\begin{equation} \label{eq:uPlusNorm}
	\ga \|\bar{u}^\ga\|^2_{L^2(Q_{\ga}^+(\hat{\mu}))} \ge \ga \|\bar{u}^\ga\|^2_{L^2(Q_{\ga}^+(0))}=\ga \|(\bar{u}^\ga)_+\|^2_{L^2(Q)},
	\end{equation}
	because $Q_{\ga}^+(\hat{\mu})\supseteq Q_{\ga}^+(0):=\{(x,t)\in Q : \bar{u}^\ga>0\}$ and $\bar{u}^\ga> \dfrac{-\hat{\mu}}{\ga}$ in $Q_{\ga}^+(\hat{\mu})$.
	Finally, substituting \eqref{eq:4e} in \eqref{eq:4c}, we obtain \eqref{eq:distance}. 
	
	Next, we use that $(\bar{u}^\ga)_+\in C(\overline{Q})$. This follows from the fact that $\bar{u}^\ga$ is the solution to the state equation \eqref{eq:Sd}, hence, it belongs to $C(\overline{Q})$. This, together with the non negativity of $\bar{\mu}\in \mathcal{M}(\overline{Q})$ combined with \eqref{eq:distance}, yields \eqref{eq:violation}. The proof is complete.
\end{proof}

\begin{theorem} \label{udorder}
	If $0\in Z_{ad}$ and $u_d\ge 0$ a.e. in $Q$, then $\|(\bar{u}^\ga)_+\|_{L^2(Q)}=\mathcal{O} (\ga^{-1})$ as $\ga \rightarrow \infty$.
\end{theorem}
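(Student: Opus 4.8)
The plan is to bypass the measure pairing $\langle\bar\mu,(\bar u^\ga)_+\rangle$ that appears in \eqref{eq:violation} and instead work directly with the regularized optimality system \eqref{eq:rega}--\eqref{eq:regc}, extracting a \emph{self-improving} quadratic inequality for $a := \|(\bar u^\ga)_+\|_{L^2(Q)}$. The two hypotheses enter at exactly two points: $0\in Z_{ad}$ lets me use $z=0$ as a competitor in the variational inequality \eqref{eq:regc}, while $u_d\ge 0$ supplies a sign that lets me control a cross term by $(\bar u^\ga)_+$ alone.

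First I would test \eqref{eq:regc} with $z=0$, obtaining $(\bar\xi^\ga,\bar z^\ga)_{L^2(Q)} \le -\alpha\|\bar z^\ga\|^2_{L^2(Q)}\le 0$. Next, pairing the adjoint equation \eqref{eq:regb} with $\bar u^\ga$ and the state equation \eqref{eq:rega} with $\bar\xi^\ga$, then integrating by parts in time and using that $(-\Delta)^s_D$ is self-adjoint --- the boundary terms vanish because $\bar\xi^\ga(T,\cdot)=0$ and $\bar u^\ga(0,\cdot)=0$ --- yields the identity
\[
(\bar\xi^\ga,\bar z^\ga)_{L^2(Q)} = \|\bar u^\ga\|^2_{L^2(Q)} - \int_Q u_d\,\bar u^\ga\;dxdt + \int_Q (\hat\mu+\ga\bar u^\ga)_+\,\bar u^\ga\;dxdt.
\]
Combining the two displays and discarding $\|\bar u^\ga\|^2_{L^2(Q)}\ge 0$ gives $\int_Q (\hat\mu+\ga\bar u^\ga)_+\,\bar u^\ga\;dxdt \le \int_Q u_d\,\bar u^\ga\;dxdt$.

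I then bound the right-hand side using $u_d\ge 0$: splitting $\bar u^\ga=(\bar u^\ga)_+-(\bar u^\ga)_-$ and discarding the nonnegative term $\int_Q u_d(\bar u^\ga)_-$ gives $\int_Q u_d\,\bar u^\ga \le \|u_d\|_{L^2(Q)}\,a$ by Cauchy--Schwarz. For the left-hand side I reuse the argument behind \eqref{eq:uPlusNorm}: restricting to the set $Q_\ga^+(\hat\mu)$ from \eqref{eq:4f} and writing $(\hat\mu+\ga\bar u^\ga)\bar u^\ga = \hat\mu\,\bar u^\ga+\ga(\bar u^\ga)^2$, the quadratic part is $\ge \ga\,a^2$ since $Q_\ga^+(0)\subseteq Q_\ga^+(\hat\mu)$, while on $Q_\ga^+(\hat\mu)$ one has $\ga\bar u^\ga>-\hat\mu$, so $\hat\mu\,\bar u^\ga\ge -\hat\mu^2/\ga$ and that term contributes at least $-\tfrac1\ga\|\hat\mu\|^2_{L^2(Q)}$. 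This produces the quadratic inequality
\[
\ga\,a^2 - \|u_d\|_{L^2(Q)}\,a - \tfrac1\ga\|\hat\mu\|^2_{L^2(Q)} \le 0,
\]
whose larger root bounds $a$ from above by $\ga^{-1}\cdot\tfrac12\big(\|u_d\|_{L^2(Q)}+\sqrt{\|u_d\|^2_{L^2(Q)}+4\|\hat\mu\|^2_{L^2(Q)}}\big)$, i.e.\ $\|(\bar u^\ga)_+\|_{L^2(Q)}=\mathcal{O}(\ga^{-1})$.

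I do not anticipate a serious obstacle: the only care needed is in justifying the integration by parts (all functions lie in $\mathbb{U}_0$ and $\bar z^\ga\in Z\hookrightarrow L^2(Q)$, so each pairing is a genuine $L^2(Q)$ inner product) and in the sign bookkeeping of the penalty lower bound. The real content is structural rather than computational: the factor $\ga$ multiplying $a^2$ against a right-hand side that grows only linearly in $a$ is precisely what upgrades the generic rate $\mathcal{O}(\ga^{-1/2})$ from \eqref{eq:violation} to $\mathcal{O}(\ga^{-1})$. As a consistency check, one can observe that $u_d\ge 0$ together with $0\in Z_{ad}$ forces the unregularized optimum to be $\bar u=0$, $\bar z=0$, in line with the constraint violation vanishing at the faster rate.
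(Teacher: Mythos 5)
Your proof is correct and follows essentially the same route as the paper's: the duality identity between the regularized state and adjoint equations, the lower bound on the penalty term via $Q_\ga^+(0)\subseteq Q_\ga^+(\hat\mu)$ and $\bar u^\ga>-\hat\mu/\ga$, and the sign of $u_d$ to control the cross term, leading to the quadratic inequality in $\|(\bar u^\ga)_+\|_{L^2(Q)}$. The only cosmetic differences are that you test \eqref{eq:regc} with $z=0$ rather than $z=\bar z^\ga/2$ and discard $\|\bar u^\ga\|^2_{L^2(Q)}$ and $\alpha\|\bar z^\ga\|^2_{L^2(Q)}$ earlier; you also write out explicitly the final root-bounding step that the paper delegates to the cited elliptic reference.
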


\begin{proof}
	We will start with the use of the regularized state and adjoint equations. The regularized state and the adjoint variables $\bar{u}^\ga$ and $\bar{\xi}^\ga$ satisfy, for every $v \in \widetilde{W}^{s,2}_0(\Om)$ and almost every $t \in (0,T)$,
	\begin{align*}
	( \partial_t \bar{u}^\ga , v )_{L^2(\Om)} +
	\mathcal{E}(\bar{u}^\ga,v)
	&= \int_{\Om} \bar{z}^\ga v\;dx  ,\\
	-( \partial_t \bar{\xi}^\ga , v )_{L^2(\Om)} +
	\mathcal{E}(\bar{\xi}^\ga,v)
	&=\int_{\Om} (\bar{u}^\ga - u_d)v\;dx+ \int_{\Om} (\hat{\mu}+\ga \bar{u}^\ga)_+ v \;dx.
	\end{align*}
	Substituting $v=\bar{\xi}^\ga$ in the first equation and $v=\bar{u}^\ga$ in the second, subtracting and integrating over time yields
	\begin{equation}\label{eq:5a}
	0=\|\bar{u}^\ga\|^2_{L^2(Q)}-\int_{Q} \bar{u}^\ga u_d \;dxdt+\int_{Q} (\hat{\mu} +\ga\bar{u}^\ga)_+ \bar{u}^\ga \;dxdt- \int_{Q} \bar{z}^\ga \bar{\xi}^\ga\;dxdt.
	\end{equation}
	Next, using the definition of the set $Q_\ga^+(\hat\mu)$ from \eqref{eq:4f} in conjunction with \eqref{eq:5a}, we obtain that 
	\begin{alignat*}{5}
	0&\ge \|\bar{u}^\ga\|^2_{L^2(Q)}-\int_{Q} \bar{u}^\ga u_d\;dxdt +\int_{Q_{\ga}^+(\hat{\mu})} \hat{\mu} \bar{u}^\ga\;dxdt + \int_{Q_{\ga}^+(\hat{\mu})} \ga (\bar{u}^\ga)^2 - \int_{Q} \bar{z}^\ga \bar{\xi}^\ga\;dxdt\\
	&\ge \|\bar{u}^\ga\|^2_{L^2(Q)}-\int_{Q} \bar{u}^\ga u_d\;dxdt -\int_{Q_{\ga}^+(\hat{\mu})} \ga^{-1} \hat{\mu}^2\;dxdt  +\int_{Q_{\ga}^+(0)} \ga (\bar{u}^\ga)^2\;dxdt - \int_{Q} \bar{z}^\ga \bar{\xi}^\ga\;dxdt,
	\end{alignat*}
	where in the last inequality we have used a similar technique as in the end of the previous proof. Then setting  $z:=\frac{\bar{z}^\ga}{2} \in Z_{ad}$ (because $Z_{ad}$ is convex and $0 \in Z_{ad}$) in \eqref{eq:regc}, we obtain that 
	\begin{alignat*}{3}
	0 &\geq   \|\bar{u}^\ga\|^2_{L^2(Q)}-\int_{Q} \bar{u}^\ga u_d \;dxdt-\int_{Q_{\ga}^+(\hat{\mu})} \ga^{-1} \hat{\mu}^2\;dxdt  + \ga \|(\bar{u}^\ga)_+\|_{L^2(Q)}^2 +\alpha \|\bar{z}^\ga\|_{L^2(Q)}^2,
	\end{alignat*}
	where we have once again used \eqref{eq:uPlusNorm}. The rest of the proof follows the lines of the proof of \cite[Theorem~5]{antil2019moreau}. The proof is finished.
\end{proof}

Now, we relax the condition on $u_d$ in Theorem \ref{udorder}. The proof is similar to \cite[Theorem~6]{antil2019moreau} and has been omitted for brevity. Recall that $(\cdot)_+=\text{max}\{0,\cdot\}$.

\begin{theorem} \label{relugaorder}
	If there exists $\varepsilon>0$ such that 
	\[ 
	-\int_{Q} u_d^- \bar{u}\;dxdt -\|\bar{u}\|^2_{L^2(Q)} -\alpha \|\bar{z}\|^2_{L^2(Q)} \le -\varepsilon,
	\]
	then $\|(\bar{u}^\ga)_+\|_{L^2(Q)}=\mathcal{O} (\ga^{-1})$ as $\ga \rightarrow \infty$. Here, $u_d^-$ denotes the negative part of the function $u_d$. 
\end{theorem}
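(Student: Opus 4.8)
The plan is to start from the inequality obtained in the proof of Theorem~\ref{udorder}, at the point \emph{before} the sign of $u_d$ is used, and then to treat the $u_d$-term more carefully. Arguing exactly as there — substituting $v=\bar\xi^\ga$ and $v=\bar u^\ga$ into the regularized state and adjoint equations, subtracting, splitting the resulting integral over the set $Q_\ga^+(\hat\mu)$ from \eqref{eq:4f}, invoking the variational inequality \eqref{eq:regc} with the admissible choice $z=\bar z^\ga/2\in Z_{ad}$, and using \eqref{eq:uPlusNorm} — none of which requires $u_d\ge 0$, I would arrive at
\[
\ga\|(\bar u^\ga)_+\|^2_{L^2(Q)} \le -\|\bar u^\ga\|^2_{L^2(Q)} + \int_Q \bar u^\ga u_d\,dxdt - \alpha\|\bar z^\ga\|^2_{L^2(Q)} + \frac{1}{\ga}\|\hat\mu\|^2_{L^2(Q)},
\]
where I have also bounded $\int_{Q_\ga^+(\hat\mu)}\ga^{-1}\hat\mu^2\,dxdt\le \ga^{-1}\|\hat\mu\|^2_{L^2(Q)}$.

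The key new step is to control $\int_Q \bar u^\ga u_d\,dxdt$ when $u_d$ changes sign. Writing $u_d=u_d^+-u_d^-$ and $\bar u^\ga=(\bar u^\ga)_+-(\bar u^\ga)_-$, I would discard the nonpositive term $-\int_Q(\bar u^\ga)_- u_d^+\,dxdt$ and estimate $\int_Q(\bar u^\ga)_+ u_d^+\,dxdt\le \|(\bar u^\ga)_+\|_{L^2(Q)}\|u_d^+\|_{L^2(Q)}$ by Cauchy--Schwarz, while keeping the genuinely negative contribution $-\int_Q\bar u^\ga u_d^-\,dxdt$ intact. Setting $h(\ga):=-\|\bar u^\ga\|^2_{L^2(Q)}-\int_Q\bar u^\ga u_d^-\,dxdt-\alpha\|\bar z^\ga\|^2_{L^2(Q)}$, this produces
\[
\ga\|(\bar u^\ga)_+\|^2_{L^2(Q)} \le h(\ga) + \|(\bar u^\ga)_+\|_{L^2(Q)}\|u_d^+\|_{L^2(Q)} + \frac{1}{\ga}\|\hat\mu\|^2_{L^2(Q)}.
\]

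The role of the hypothesis is precisely to sign $h(\ga)$ for large $\ga$. Since $\bar z^\ga\to\bar z$ strongly in $L^2(Q)$ by \cite[Proposition~2.1]{hintermuller_kunisch} and the control-to-state map is continuous, $\bar u^\ga\to\bar u$ strongly in $L^2(Q)$ as well, whence $h(\ga)\to -\|\bar u\|^2_{L^2(Q)}-\int_Q u_d^-\bar u\,dxdt-\alpha\|\bar z\|^2_{L^2(Q)}$, which is $\le-\varepsilon<0$ by assumption. Thus there is $\ga_0$ with $h(\ga)\le 0$ for every $\ga\ge\ga_0$, and for such $\ga$ I may drop $h(\ga)$. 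Abbreviating $a:=\|(\bar u^\ga)_+\|_{L^2(Q)}$, I am left with the scalar quadratic inequality $\ga a^2\le a\|u_d^+\|_{L^2(Q)}+\ga^{-1}\|\hat\mu\|^2_{L^2(Q)}$, whose positive root gives
\[
a \le \frac{1}{2\ga}\left(\|u_d^+\|_{L^2(Q)}+\sqrt{\|u_d^+\|^2_{L^2(Q)}+4\|\hat\mu\|^2_{L^2(Q)}}\right) = \mathcal{O}(\ga^{-1}),
\]
as claimed.

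I expect the main obstacle to be conceptual rather than computational: recognizing that, unlike the case $u_d\ge 0$ of Theorem~\ref{udorder} (where $u_d^-=0$ forces $h(\ga)\le 0$ for \emph{every} $\ga$ automatically), the sign-changing case forces one to pass to the limit $\ga\to\infty$ and to rely on the strict-negativity hypothesis to guarantee $h(\ga)\le 0$ eventually. The only genuinely technical point to justify is the strong $L^2(Q)$ convergence $\bar u^\ga\to\bar u$, which follows from the strong convergence of the controls together with continuity of the parabolic solution operator from $L^2(Q)$ into $L^2(Q)$; once that is in place, the quadratic-inequality argument closes the proof directly.
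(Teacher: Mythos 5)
Your argument is correct and follows exactly the route the paper intends: it continues the chain of inequalities from the proof of Theorem~\ref{udorder} up to the point where the sign of $u_d$ would be used, splits $u_d=u_d^+-u_d^-$, absorbs the $u_d^+$ contribution via Cauchy--Schwarz into a quadratic inequality for $\|(\bar u^\ga)_+\|_{L^2(Q)}$, and uses the strong convergence $\bar z^\ga\to\bar z$ (hence $\bar u^\ga\to\bar u$) together with the strict-negativity hypothesis to discard the remaining terms for large $\ga$ — precisely the adaptation of \cite[Theorem~6]{antil2019moreau} that the paper invokes in lieu of a written proof.
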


\section{Numerical Experiments}
\label{s:num}

In this section we present numerical experiments to show the validity of Theorem \ref{udorder}. We set $\Omega \subset \mathbb R^2$ to be a disk of radius $1/2$ centered at the origin. We truncate $\mathbb R^2\backslash \Omega$ and bound the exterior of $\Omega$ with a circle of radius $3/2$ centered at the origin. For these experiments, we create a triangular mesh of $\Omega$ and its truncated exterior consisting of 1920 triangles.  In time we discretize the interval $[0,1]$ into equal sized subintervals of length 0.01. 

To solve the state and adjoint problem we use standard $\mathcal P_1$ Lagrangian finite elements in space and in time we use backward Euler time stepping.   For details on the discretization of the fractional Laplacian operator see \cite{acosta2017short}.  While we are using continuous piecewise linear functions to approximate $u$ and $p$, we use piecewise constants to approximate the control $z$, that is, $z$ is constant on each element of the mesh. To minimize the objective function, we use the BFGS method.  For the following experiments we only consider the state constraint $u_b$, and do not consider control constraints.  

To make what follows easier to read, we define 
\[
\widetilde{u}(x,y) = \frac{2^{-2s}}{\Gamma(1+s)^2}\left(1/4-(x^2 + y^2)_+\right)^s,
\]
and subsequently define our desired state and state constraint respectively to be 
\[
u_d (x,y,t) = 10t^2\widetilde{u}, \qquad 
	u_b(x,y,t) = \frac{1}{10} (1-t)^4 \widetilde{u}.
\]
For $s = 0.8$, Figure \ref{fig:1} shows the convergence of $\|(\bar{u}^\gamma - u_b)_+\|_{L^2(Q)}$ as $\gamma$ increases.  Notice that we get a convergence rate of $\mathcal O(\gamma^{-1})$, which agrees with our theoretical result in Theorem~\ref{udorder} as we are taking $u_d \geq 0$. 

\begin{figure}[ht]
\centering
\includegraphics[width=0.5\textwidth]{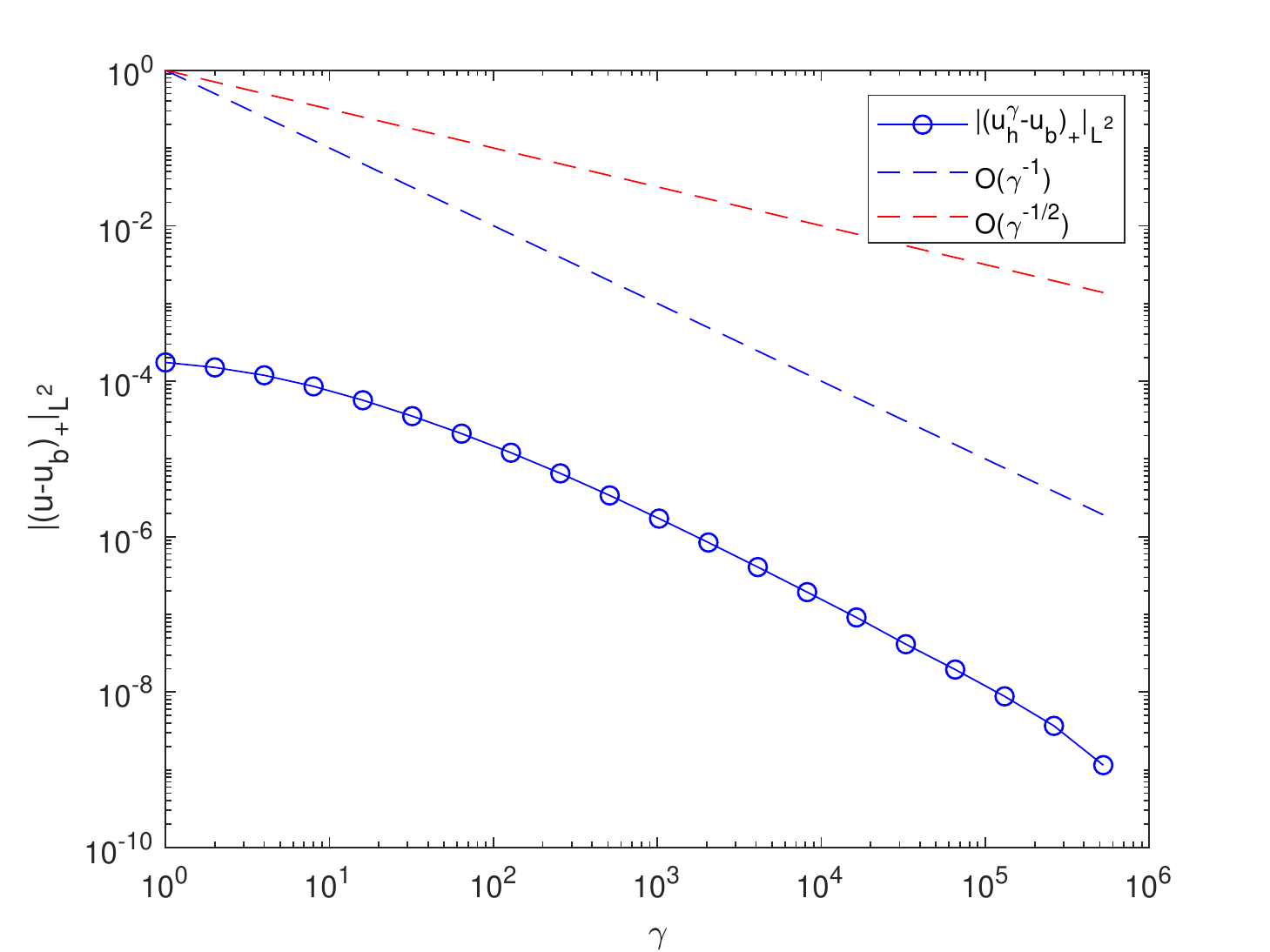} 
\caption{Convergence of $\|(\bar{u}^\gamma-u_b)_+\|_{L^2(Q)}$ as $\gamma$ increases for $s = 0.8$. We observe a linear rate of convergence in $\gamma$ as expected from Theorem~\ref{udorder}.  }\label{fig:1}
\end{figure} 

In Figures \ref{fig:3} and \ref{fig:4} we show some snapshots of numerical experiments in a slightly different context.  We keep $s = 0.8$, but now use
\[
u_d (x,y,t) := 10(1+t)\widetilde{u},  \qquad 
	u_b(x,y,t): = \frac14 \widetilde{u},
\] 
where we note that $u_b$ is constant in time.  We show $u_d$ at time $t = 0.88$ and $u_b$ in Figure \ref{fig:2}.  We also implement a constant force in the state equation, that is, our PDE constraint takes the form of 
\[
\partial_t u + (-\Delta)^s u = f + z,
\]
where we take $f = 1000$ to be constant in space and time. For $\gamma = 1,048,576$, we show the optimal state, control, adjoint, and Lagrange multiplier at $ t = 0.02$ in Figure \ref{fig:3} and at $t = 0.88$ in Figure \ref{fig:4}.  We note that inbetween these two time steps there is a period from $t = 0.03$ to $t = 0.86$ in which the state constraints are not active. In both plots, we clearly notice that the Lagrange multiplier is a measure. In addition, the adjoint variable is non-smooth, as expected.

\begin{figure}[ht]
\centering
\includegraphics[width=0.45\textwidth]{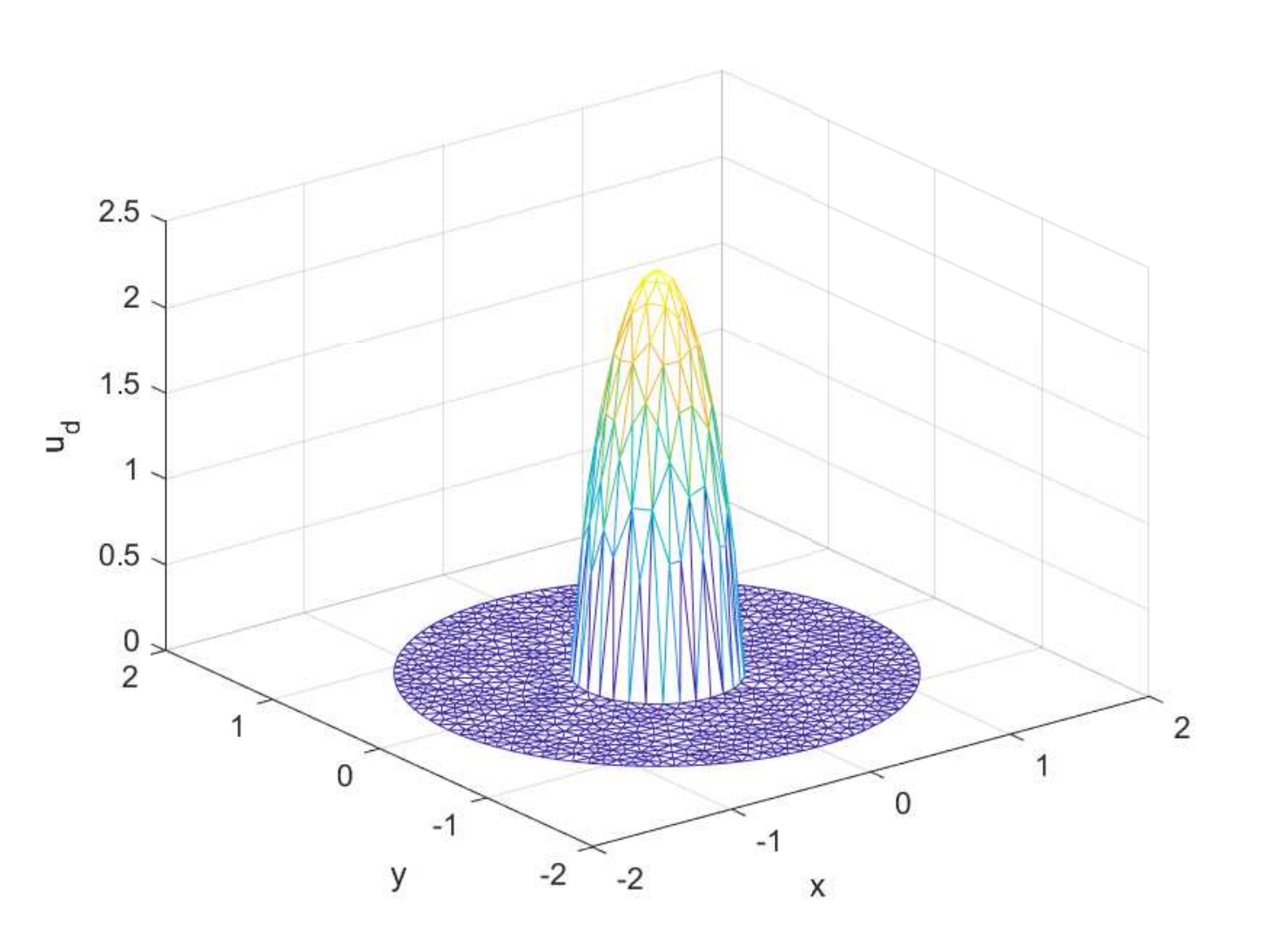} \qquad
\includegraphics[width = 0.45\textwidth]{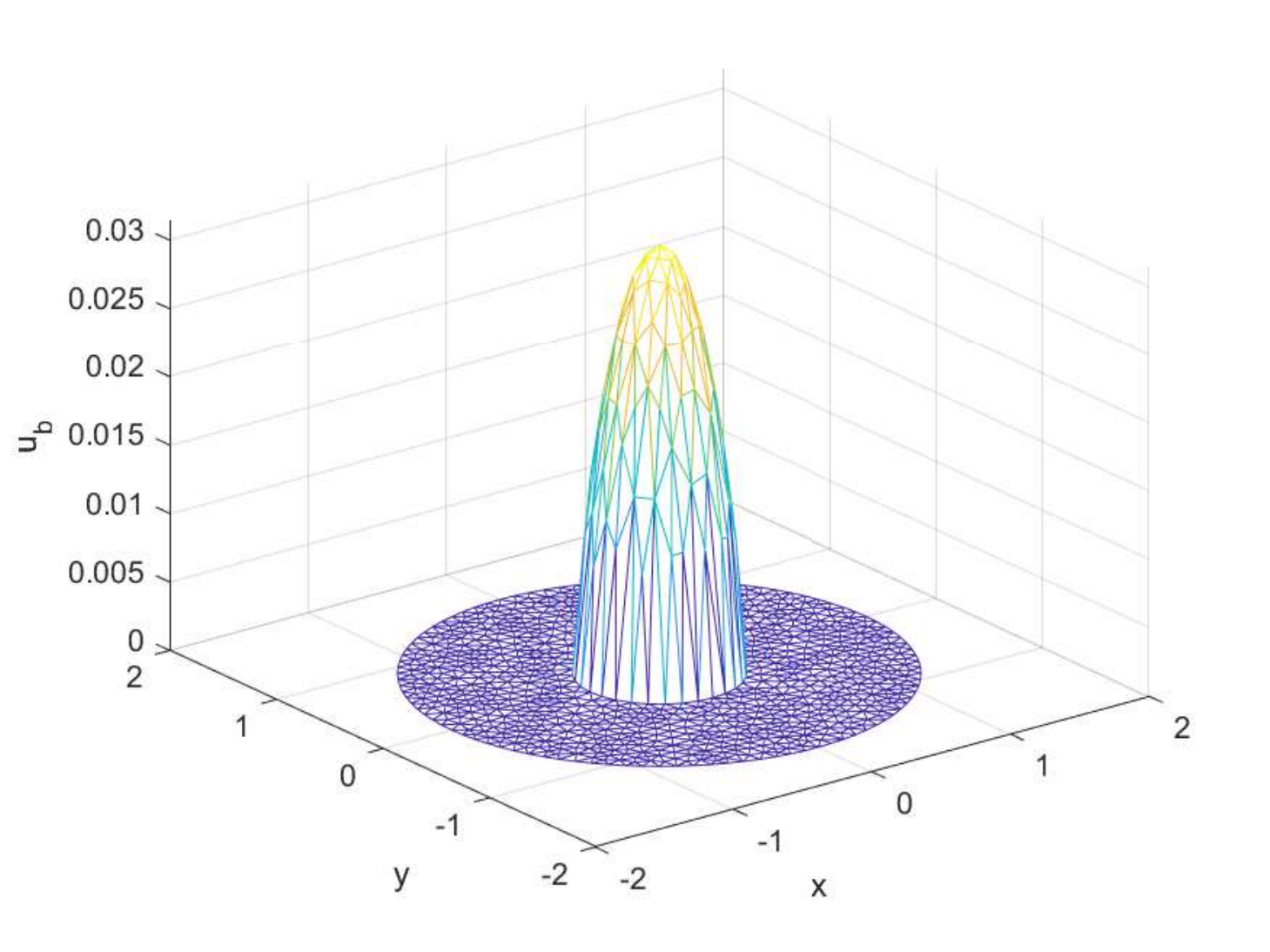} 
\caption{The desired state $u_d$ (left) and state constraint $u_b$ (right) for $s = 0.8$ at $t = 0.88$}\label{fig:2}
\end{figure}

\begin{figure}[ht]
\centering
\includegraphics[width=0.8\textwidth]{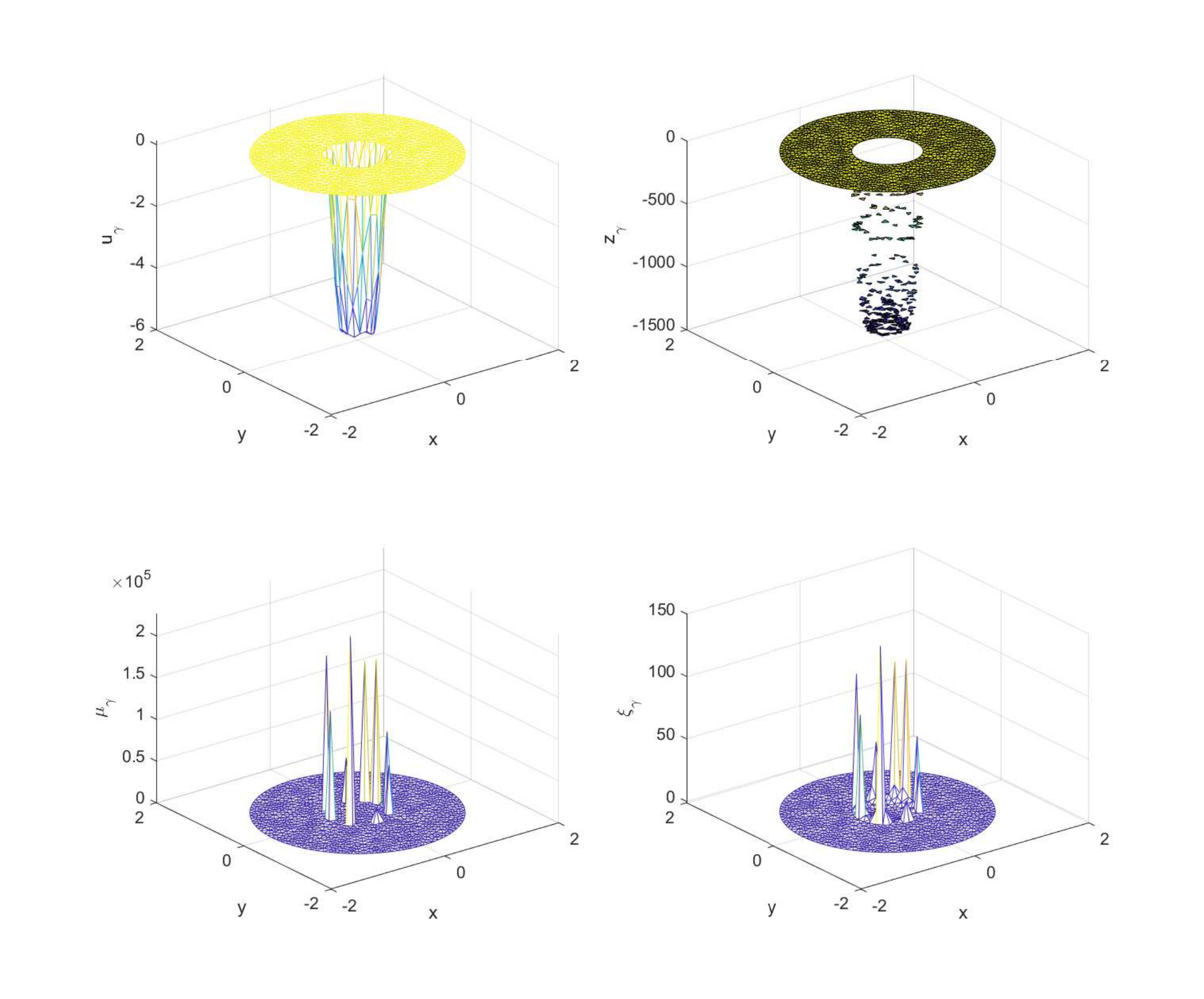}
\caption{The optimal state (upper left), optimal control (upper right), Lagrange multiplier (lower left), and optimal adjoint (lower right) for $s = 0.8$ and $\gamma = 1,048,576$ at time $t = 0.02$ \,.}\label{fig:3}
\end{figure}

\begin{figure}[ht]
\centering
\includegraphics[width=0.8\textwidth]{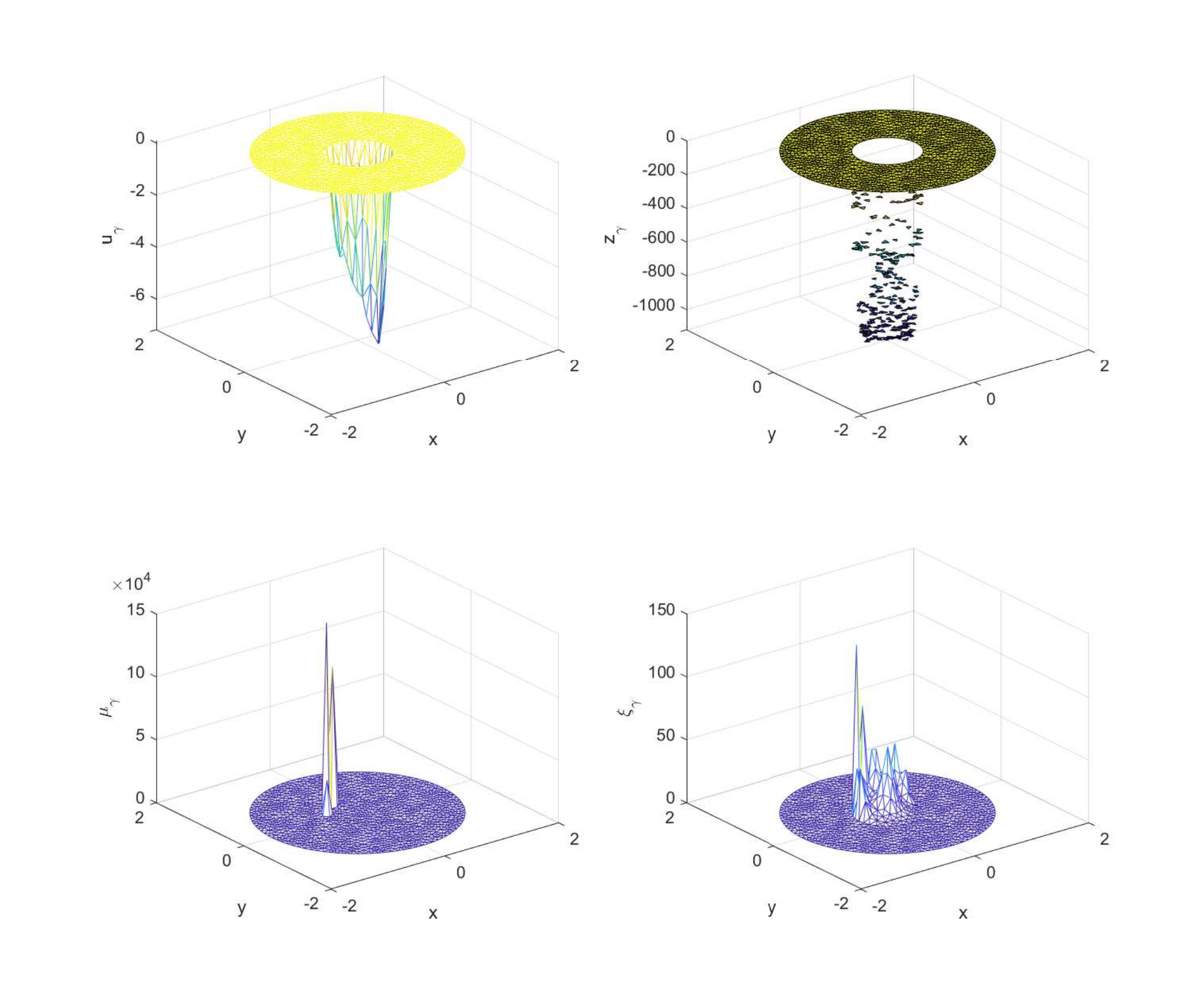}
\caption{The optimal state (upper left), optimal control (upper right), Lagrange multiplier (lower left), and optimal adjoint (lower right) for $s = 0.8$ and $\gamma = 1,048,576$ at time $t = 0.88$ \,.}\label{fig:4}
\end{figure}

\section{Conclusion and Open Questions}
In this work, we have presented and analyzed an optimal control problem with a fractional parabolic PDE constraint as well as control and state constraints.  The primary focus has been to show the well-posedness of this problem and establish some regularity on the state variable $u$.  Using the representation of $u$ from the theory of strongly continuous submarkovian semigroups, we have shown that $u$ is essentially bounded and continuous. We also introduced the reduced form of the control problem and derived the first order optimality conditions.  With a perspective towards implementation, we have introduced the Moreau-Yosida regularization of the control problem and showed the convergence of these regularized approximations to the solution of the original problem.  The theoretical rate of convergence has been verified numerically with an implementation using finite elements and backward Euler time-stepping. 


The focus of the paper is on the analysis and algorithm for the optimal control problem. But there remain several open questions, especially on the regularity of the original state equation. It remains open to establish the continuity of solutions to the fractional parabolic problem when the right-hand-side belongs to a dual space. Notice that this result is known in the elliptic case \cite{HAntil_DVerma_MWarma_2019b}. Another interesting direction to pursue is to carry out a detailed error analysis for the discrete problem.

\bibliographystyle{plain}
\bibliography{refs}

\def\ocirc#1{\ifmmode\setbox0=\hbox{$#1$}\dimen0=\ht0 \advance\dimen0
  by1pt\rlap{\hbox to\wd0{\hss\raise\dimen0
  \hbox{\hskip.2em$\scriptscriptstyle\circ$}\hss}}#1\else {\accent"17 #1}\fi}
  \def\cprime{$'$} \def\ocirc#1{\ifmmode\setbox0=\hbox{$#1$}\dimen0=\ht0
  \advance\dimen0 by1pt\rlap{\hbox to\wd0{\hss\raise\dimen0
  \hbox{\hskip.2em$\scriptscriptstyle\circ$}\hss}}#1\else {\accent"17 #1}\fi}
\begin{thebibliography}{10}

\bibitem{acosta2017short}
G.~Acosta, F.M. Bersetche, and J.P. Borthagaray.
\newblock A short {FE} implementation for a 2d homogeneous dirichlet problem of
  a fractional laplacian.
\newblock {\em Computers \& Mathematics with Applications}, 74(4):784--816,
  2017.

\bibitem{antil2019moreau}
H.~Antil, T.S. Brown, and D.~Verma.
\newblock Moreau-{Y}osida regularization for optimal control of fractional
  elliptic problems with state and control constraints.
\newblock {\em arXiv preprint arXiv:1912.05033}, 2019.

\bibitem{HAntil_RKhatri_MWarma_2019a}
H.~Antil, R.~Khatri, and M.~Warma.
\newblock External optimal control of nonlocal {PDE}s.
\newblock {\em Inverse Problems}, 35(8):084003, 35, 2019.

\bibitem{HAntil_DVerma_MWarma_2019b}
H.~Antil, D.~Verma, and M.~Warma.
\newblock Optimal control of fractional elliptic {PDE}s with state constraints
  and characterization of the dual of fractional order sobolev spaces.
\newblock {\em Submitted}, 2019.

\bibitem{HAntil_DVerma_MWarma_2020a}
H.~Antil, D.~Verma, and M.~Warma.
\newblock External optimal control of fractional parabolic {PDE}s.
\newblock {\em ESAIM Control Optim. Calc. Var.}, 26, 2020.

\bibitem{HAntil_MWarma_2020a}
H.~Antil and M.~Warma.
\newblock Optimal control of fractional semilinear {PDE}s.
\newblock {\em ESAIM Control Optim. Calc. Var.}, 26:Art. 5, 30, 2020.

\bibitem{BWZ}
U.~Biccari, M.~Warma, and E.~Zuazua.
\newblock Local regularity for fractional heat equations.
\newblock In {\em Recent Advances in PDEs: Analysis, Numerics and Control},
  pages 233--249. Springer, 2018.

\bibitem{Caf3}
L.~Caffarelli and L.~Silvestre.
\newblock An extension problem related to the fractional {L}aplacian.
\newblock {\em Comm. Partial Differential Equations}, 32(7-9):1245--1260, 2007.

\bibitem{ECasas_1997a}
E.~Casas.
\newblock Pontryagin's principle for state-constrained boundary control
  problems of semilinear parabolic equations.
\newblock {\em SIAM J. Control Optim.}, 35(4):1297--1327, 1997.

\bibitem{ClWa}
B.~Claus and M.~Warma.
\newblock Realization of the fractional laplacian with nonlocal exterior
  conditions via forms method.
\newblock {\em Journal of Evolution Equations}, 2020: DOI:
  https://doi.org/10.1007/s00028-020-00567-0.

\bibitem{Dav}
E.B. Davies.
\newblock {\em Heat kernels and spectral theory}, volume~92 of {\em Cambridge
  Tracts in Mathematics}.
\newblock Cambridge University Press, Cambridge, 1990.

\bibitem{MDElia_MGunzburger_2014a}
M.~D'Elia and M.~Gunzburger.
\newblock Optimal distributed control of nonlocal steady diffusion problems.
\newblock {\em SIAM J. Control Optim.}, 52(1):243--273, 2014.

\bibitem{NPV}
E.~Di~Nezza, G.~Palatucci, and E.~Valdinoci.
\newblock Hitchhiker's guide to the fractional {S}obolev spaces.
\newblock {\em Bull. Sci. Math.}, 136(5):521--573, 2012.

\bibitem{GW-CPDE}
C.~G. Gal and M.~Warma.
\newblock Nonlocal transmission problems with fractional diffusion and boundary
  conditions on non-smooth interfaces.
\newblock {\em Comm. Partial Differential Equations}, 42(4):579--625, 2017.

\bibitem{Gris}
P.~Grisvard.
\newblock {\em Elliptic problems in nonsmooth domains}, volume~24 of {\em
  Monographs and Studies in Mathematics}.
\newblock Pitman (Advanced Publishing Program), Boston, MA, 1985.

\bibitem{HiHi2009}
M.~Hinterm\"{u}ller and M.~Hinze.
\newblock Moreau-{Y}osida regularization in state constrained elliptic control
  problems: error estimates and parameter adjustment.
\newblock {\em SIAM J. Numer. Anal.}, 47(3):1666--1683, 2009.

\bibitem{hintermuller_kunisch}
M.~Hinterm\"{u}ller and K.~Kunisch.
\newblock Feasible and noninterior path-following in constrained minimization
  with low multiplier regularity.
\newblock {\em SIAM J. Control Optim.}, 45(4):1198--1221, 2006.

\bibitem{MHinze_RPinnau_MUlbrich_SUlbrich_2009a}
M.~Hinze, R.~Pinnau, M.~Ulbrich, and S.~Ulbrich.
\newblock {\em Optimization with {PDE} constraints}, volume~23 of {\em
  Mathematical Modelling: Theory and Applications}.
\newblock Springer, New York, 2009.

\bibitem{HyNeVeWe2016}
T.~Hyt\"{o}nen, J.~van Neerven, M.~Veraar, and L.~Weis.
\newblock {\em Analysis in {B}anach spaces. {V}ol. {I}. {M}artingales and
  {L}ittlewood-{P}aley theory}, volume~63 of {\em Results in Mathematics and
  Related Areas. 3rd Series. A Series of Modern Surveys in Mathematics}.
\newblock Springer, Cham, 2016.

\bibitem{KIto_KKunisch_2008a}
K.~Ito and K.~Kunisch.
\newblock {\em Lagrange multiplier approach to variational problems and
  applications}, volume~15 of {\em Advances in Design and Control}.
\newblock Society for Industrial and Applied Mathematics (SIAM), Philadelphia,
  PA, 2008.

\bibitem{TLeonori_IPeral_APrimo_FSoria_2015a}
T.~Leonori, I.~Peral, A.~Primo, and F.~Soria.
\newblock Basic estimates for solutions of a class of nonlocal elliptic and
  parabolic equations.
\newblock {\em Discrete Contin. Dyn. Syst.}, 35(12):6031--6068, 2015.

\bibitem{NeTr2009}
I.~Neitzel and F.~Tr\"{o}ltzsch.
\newblock On regularization methods for the numerical solution of parabolic
  control problems with pointwise state constraints.
\newblock {\em ESAIM Control Optim. Calc. Var.}, 15(2):426--453, 2009.

\bibitem{Ouha}
El-M. Ouhabaz.
\newblock {\em Analysis of heat equations on domains}, volume~31 of {\em London
  Mathematical Society Monographs Series}.
\newblock Princeton University Press, Princeton, NJ, 2005.

\bibitem{RS-DP}
X.~Ros-Oton and J.~Serra.
\newblock The extremal solution for the fractional {L}aplacian.
\newblock {\em Calc. Var. Partial Differential Equations}, 50(3-4):723--750,
  2014.

\bibitem{SV2}
R.~Servadei and E.~Valdinoci.
\newblock On the spectrum of two different fractional operators.
\newblock {\em Proc. Roy. Soc. Edinburgh Sect. A}, 144(4):831--855, 2014.

\bibitem{War-DN1}
M.~Warma.
\newblock A fractional {D}irichlet-to-{N}eumann operator on bounded {L}ipschitz
  domains.
\newblock {\em Commun. Pure Appl. Anal.}, 14(5):2043--2067, 2015.

\bibitem{War}
M.~Warma.
\newblock The fractional relative capacity and the fractional {L}aplacian with
  {N}eumann and {R}obin boundary conditions on open sets.
\newblock {\em Potential Anal.}, 42(2):499--547, 2015.

\bibitem{weiss2020fractional}
C.~J. Weiss, B.G. van Bloemen~Waanders, and H.~Antil.
\newblock Fractional operators applied to geophysical electromagnetics.
\newblock {\em Geophysical Journal International}, 220(2):1242--1259, 2020.

\end{thebibliography}

\end{document}